\newtheorem{theorem}{Theorem}[section]
\newtheorem{corollary}[theorem]{Corollary}
\newtheorem{lemma}[theorem]{Lemma}
\newtheorem{proposition}[theorem]{Proposition}
\newtheorem{remark}[theorem]{Remark}
\newtheorem{definition}[theorem]{Definition}
\newtheorem{example}[theorem]{Example}
\begin{document}

\title{Infinite loop spaces associated to Affine Kac-Moody groups}

\author {Xianzu Lin }

\date{ }
\maketitle
   {\small \it Institute of Mathematics,  Academy of Mathematics and Systems
   Science}\\
    \   {\small \it Chinese Academy of Sciences, Beijing, 100190,
      China}\\
      \              {\small \it Email: linxianzu@126.com}

\begin{abstract}
The main purpose of this paper is to construct infinite loop spaces
from affine Kac-Moody groups, It is well known that to each infinite
class of classical groups over a commutative ring $R$, we can
associate an infinite loop space $G(R)$ by Quillen's plus
construction, in fact it is a functor from the category of
commutative rings to the category of infinite loop spaces. In this
paper we generalize this fact to the cases of affine Kac-Moody
groups. Roughly speaking, there are seven infinite classes of affine
Kac-Moody groups, and to each infinite class we can associate an
analogous functor.

\

Keywords: generalized Cartan matrix; infinite loop space; affine
Kac-Moody group;

\

2000 MR Subject Classification:55P47, 20G44
\end{abstract}

\section{Introduction}
We say that a pointed space $X$ is an $infinite\ loop\ space$ if
there is a sequence of (pointed) spaces $X_{0},X_{1},\cdots$ with
$X_{0}=X$ and weak homotopy equivalences $X_{n}\simeq \Omega
X_{n+1}$.

\begin{example}Let $GL(n)$ be the general linear group over $\mathbb{C}$
and let $BGL$ be the limit of classifying space $\varinjlim BGL(n)$.
By the Bott peoriodicity theorem \cite{a,bot} we have a weak
homotopy equivalence
$$\mathbb{Z}\times BGL\simeq\Omega^{2}(\mathbb{Z}\times BGL);$$
thus $BGL$ is an infinite loop space. Similar results hold for $ BO$
and $BSp$, where $O$ and $Sp$ are the infinite orthogonal and
symplectic group over $\mathbb{C}$ respectively.
\end{example}

In fact we have a very general method of construction. First, we
need some preliminaries.

Let $\Sigma_{n}$ be the symmetric group on the set
$\{1,2,\cdots,n\}$. For any $\sigma\in\Sigma_{m}$ and
$\tau\in\Sigma_{n}$, $\sigma\oplus\tau$ is given by
$\sigma\oplus\tau=$
\begin{subnumcases}
{\sigma\oplus\tau(i)=}
\sigma(i), &$1\leq i\leq m$, \\
m+\tau(i), &$m<i\leq m+n$,
\end{subnumcases}

and $c(m,n)\in\Sigma_{m+n}$ is defined by

\begin{subnumcases}
{c(m,n)(i)=}
n+i, &$1\leq i\leq m$, \\
i-m, &$m<i\leq m+n$.
\end{subnumcases}

The definitions imply $c(m,n)=c(n,m)^{-1}$.

\begin{theorem}\label{th1}
Given a sequence of topological groups
$G(1),G(2),\cdots,G(n),\cdots$ together with homomorphisms
$\phi_{m}:\Sigma_{m}\rightarrow G(m)$, $f_{m}:G(m)\rightarrow
G(m+1)$, $m>0$, satisfying,\\
1) for any $\alpha\in\Sigma_{m}$, we have
$f_{m}\phi_{m}(\alpha)=\phi_{m+1}(\alpha)$;\\
2)set $f_{m,n}:= f_{m+n-1}\cdots f_{m+1}f_{m}$, then
 $\phi_{n}(c(n,m))(f_{n,m}(G(n)))\phi_{n}(c(m,n))$ and $f_{m,n}(G(m))$
are commutative in $G(m+n)$;\\
3)let $G=\lim_{n\rightarrow\infty}G(n)$ and let $\pi'=[\pi,\pi]$ be
the commutator subgroup of $\pi=\pi_{0}(G)$,
we have $\pi'=[\pi',\pi']$.\\
Then $BG^{+}$ (where $+$ means the Quillen's plus construction for
$BG$ and $\pi^{'}\subseteq\pi_{1}(BG)$ ) is an infinite loop space.
\end{theorem}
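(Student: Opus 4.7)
The plan is to extract from the data an $E_\infty$-space structure on $Y = \coprod_{n\geq 0} BG(n)$ (with $BG(0) = \ast$), feed $Y$ into an infinite loop space machine (due to Segal or May) so that the group completion $\Omega BY$ becomes an infinite loop space, and finally identify the basepoint component of $\Omega BY$ with $BG^+$ via the McDuff--Segal group completion theorem, using hypothesis (3) to invoke Quillen's plus construction. The main obstacle will not be the group completion step nor the identification with $BG^+$ — both are standard once the $E_\infty$ structure is in place — but rather the coherence check: translating the purely algebraic data $(\phi_n, f_m, c(m,n))$ into the higher coherences needed to run an infinite loop machine.

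The first concrete step is to build a multiplication $\mu_{m,n}\colon BG(m) \times BG(n) \to BG(m+n)$. Condition (2) says that the two subgroups $f_{m,n}(G(m))$ and $\phi_{m+n}(c(n,m))\, f_{n,m}(G(n))\, \phi_{m+n}(c(m,n))$ of $G(m+n)$ centralize each other, so the assignment
\[
(g, h) \longmapsto f_{m,n}(g) \cdot \phi_{m+n}(c(n,m))\, f_{n,m}(h)\, \phi_{m+n}(c(m,n))
\]
is a group homomorphism $G(m) \times G(n) \to G(m+n)$; applying the classifying space functor $B(-)$ gives $\mu_{m,n}$. Condition (1) then guarantees that the $\Sigma_n$-actions supplied by $\phi_n$ are compatible with the stabilizations $f_m$.

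Next I would verify that $(Y, \mu, \phi)$ is a permutative $H$-space in a form suitable for an infinite loop machine. The strict unit is $BG(0) = \ast$; associativity follows by applying condition (2) repeatedly to compare the two bracketings of $\mu$ on $G(m) \times G(n) \times G(p)$; the symmetry isomorphism $\mu_{n,m} \circ T \simeq \mathrm{conj}_{\phi_{m+n}(c(m,n))} \circ \mu_{m,n}$, with $T$ the swap of factors, is engineered into the choice of conjugator; and the pentagon and hexagon axioms reduce to braid-type identities among the shuffles $c(m,n)$ together with condition (1). Once these coherences are verified, $Y$ carries the action of an $E_\infty$-operad and the infinite loop machine delivers $\Omega BY$ as an infinite loop space.

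To close the argument I would apply the group completion theorem to $Y$: the natural map $Y \to \Omega BY$ induces, after inverting $\pi_0(Y) = \mathbb{N}$, an isomorphism on homology, so each component of $\Omega BY$ has the homology of $BG$ where $G = \varinjlim_n G(n)$. Hypothesis (3), the perfectness of $\pi' = [\pi, \pi]$, is precisely the input needed for Quillen's plus construction $BG \to BG^+$ (with respect to $\pi'$) to exist as a homology equivalence. Since $\Omega BY$ is an $H$-space its components are simple, so the comparison $BG^+ \to$ basepoint component of $\Omega BY$ is a weak homotopy equivalence; hence $BG^+$, being a component of an infinite loop space, is itself an infinite loop space.
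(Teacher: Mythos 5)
Your proposal is correct and follows essentially the same route as the paper: the paper packages the data $(\oplus,c)$ as a permutative topological category with object set $\mathbb{N}$ and $\mathrm{hom}(m,m)=G(m)$, whose classifying space is exactly your $Y=\coprod_{n}BG(n)$, and then cites Fiedorowicz--Priddy for the infinite loop machine, the group completion theorem, and the identification of the basepoint component with $BG^{+}$ via condition (3). The only cosmetic difference is that you verify the $E_{\infty}$ coherences directly on $Y$ rather than at the strictly associative categorical level, where condition (2) is precisely functoriality of $\oplus$ and naturality of the symmetry $c$.
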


\begin{proof}
Define a topological category $\xi$ as follows. The objects of $\xi$
are nonnegative integers, $hom_{\xi}(m,n)$ is empty if $m\neq n$ and
$hom_{\xi}(m,m)=G(m)$. One checks that $(\xi,\oplus,0,c)$ has a
structure of permutative category, $M$ is the corresponding
classifying space. The rest of the proof is the same as in
\cite{fp}p.62
\end{proof}

\begin{corollary}
Let $R$ be a commutative ring and set
$$SL(\infty,R)=\lim_{n\rightarrow\infty}SL(n,R),$$ then
$BSL(\infty,R)^{+}$ is an infinite loop space.
\end{corollary}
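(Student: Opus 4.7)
The plan is to apply Theorem~\ref{th1} to the sequence $G(n) = SL(n, R)$ (with each factor discrete), taking the stabilization $f_n : SL(n, R) \to SL(n+1, R)$ to be $A \mapsto \mathrm{diag}(A, 1)$. The first task is to produce homomorphisms $\phi_n : \Sigma_n \to SL(n, R)$ lifting the permutation action on coordinates. Since a plain permutation matrix $P_\sigma$ has determinant $\mathrm{sgn}(\sigma)$, the naive assignment $\sigma \mapsto P_\sigma$ lands in $GL(n, R)$ but not in $SL(n, R)$; I would fix this either via signed permutation matrices or, more cleanly, by reindexing so that $\phi_n(\sigma) = \mathrm{diag}(P_\sigma, \mathrm{sgn}(\sigma)) \in SL(n+1, R)$. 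This formula is manifestly a homomorphism, and the colimit $SL(\infty, R)$ is unaffected by the shift in indexing.

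With $\phi_n$ and $f_n$ in place, I would verify the three hypotheses of Theorem~\ref{th1} in turn. Condition (1), $f_n \circ \phi_n = \phi_{n+1}$ on the natural inclusion $\Sigma_n \subset \Sigma_{n+1}$, is immediate from the block-diagonal formulas. For condition (2), the image $f_{m,n}(G(m))$ sits in the first $m$ rows and columns of $SL(m+n, R)$, whereas conjugation by $\phi_{m+n}(c(n, m))$ moves $f_{n,m}(G(n))$ into the last $n$ rows and columns; matrices supported in disjoint rows and columns commute, which gives the required relation.

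Condition (3) is the crucial algebraic step. Since $SL(\infty, R)$ is discrete, $\pi = \pi_0(G) = SL(\infty, R)$, and Whitehead's lemma identifies $\pi' = [SL(\infty, R), SL(\infty, R)]$ with the group $E(\infty, R)$ of stable elementary matrices. The commutator identity $e_{ij}(a) = [e_{ik}(a), e_{kj}(1)]$, valid stably for any distinct third index $k$, shows that every elementary generator is itself a commutator of elementary generators, so $E(\infty, R) = [E(\infty, R), E(\infty, R)]$ and hence $\pi' = [\pi', \pi']$. The subtle point in the whole argument is the construction of $\phi_n$ so as to land in $SL$ rather than $GL$; once that is handled, the remaining verifications are direct block computations and the corollary follows from Theorem~\ref{th1}.
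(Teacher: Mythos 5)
Your overall strategy is the paper's, and you correctly isolate the one nontrivial point: a permutation matrix has determinant $\mathrm{sgn}(\sigma)$, so the naive $\phi_{n}$ lands in $GL(n,R)$ rather than $SL(n,R)$. But your proposed repair, $\phi_{n}(\sigma)=\mathrm{diag}(P_{\sigma},\mathrm{sgn}(\sigma))\in SL(n+1,R)$ together with the stabilization $A\mapsto\mathrm{diag}(A,1)$, does not satisfy the hypotheses of Theorem~\ref{th1}. Condition (1) already fails: for odd $\sigma\in\Sigma_{n}$ one gets $f_{n}\phi_{n}(\sigma)=\mathrm{diag}(P_{\sigma},-1,1)$ whereas $\phi_{n+1}(\sigma)=\mathrm{diag}(P_{\sigma},1,-1)$. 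More seriously, condition (2) cannot be repaired within this scheme no matter where the new coordinate is inserted: the auxiliary ``sign'' slot is necessarily shared by $f_{m,n}(G(m))$ and by the conjugate of $f_{n,m}(G(n))$ (the permutation $c(n,m)$ only shuffles the first $m+n$ slots), so both subgroups are full copies of $SL$ acting nontrivially on a common coordinate; for instance $e_{1,\ast}(1)$ lies in the first and $e_{\ast,m+1}(1)$ in the second, and their commutator is $e_{1,m+1}(1)\neq 1$. Theorem~\ref{th1} demands strict commutativity, not commutativity up to homotopy or modulo the center. Your fallback of ``signed permutation matrices'' inside $SL(n,R)$ itself also fails already for $n=2$: any matrix of the form $\begin{pmatrix}0&a\\ b&0\end{pmatrix}$ with determinant $1$ squares to $-I$, so $\Sigma_{2}$ does not lift.

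The paper's (one-line) fix is different and does work: take $\phi_{n}:\Sigma_{n}\rightarrow SL(2n,R)$ by letting $\sigma$ permute $n$ blocks of size $2$, so that $\det\phi_{n}(\sigma)=\mathrm{sgn}(\sigma)^{2}=1$ with no auxiliary coordinate. Then $G(n)=SL(2n,R)$, condition (1) is immediate, the supports in condition (2) are genuinely disjoint ($\{1,\dots,2m\}$ versus $\{2m+1,\dots,2m+2n\}$), and your block computation goes through verbatim. Your verification of condition (3) via Whitehead's lemma and the identity $e_{ij}(a)=[e_{ik}(a),e_{kj}(1)]$ is correct, and is in fact more explicit than anything the paper writes down.
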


\begin{proof}
We can easily find natural homomorphisms
$\phi_{n}:\Sigma_{n}\rightarrow SL(2n,R)$, $n>0$ that satisfy the
conditions of the Theorem \ref{th1}.
\end{proof}

Similarly we can show that $BGL(\infty,R)^{+}$, $BO(\infty,R)^{+}$,
$BSO(\infty,R)^{+}$,\\ $BSp(\infty,R)^{+}$ are all infinite loop
spaces. In fact they are functors from the category of commutative
rings to the category of infinite loop spaces.  The main purpose of
this paper is to construct infinite loop spaces from affine
Kac-Moody groups, which are infinite dimensional generalization of
algebraic groups. Roughly speaking, there are seven infinite classes
of affine Kac-Moody groups, and to each infinite class we can
associate an analogous functor.

This paper is structured as follows. \S2 is a short review of
Kac-Moody algebras and Kac-Moody groups, in \S3 we construct the
infinite loop spaces corresponding to affine Kac-Moody groups of
type $A_{2l-1}^{(2)}$, in the final section we consider several
variations and the other cases. Throughout this paper $R$ will be a
fixed commutative ring.

\section{Kac-Moody Algebras and Kac-Moody Groups}
In this section, we give a brief review of Kac-Moody algebras and
Kac-Moody groups, details can be found in \cite{ka,r,ti}.

\begin{definition}
A generalized Cartan matrix is a matrix $A=(a_{i,j})_{i,j=1}^{n}$
satisfying, $a_{i,i}=2$, $a_{i,j}$ are non-positive integers for
$i\neq j$, and $a_{i,j}\neq0$ implies $a_{j,i}\neq0$.
\end{definition}

\begin{definition}
The Kac-Moody algebra $\mathfrak{g}(A)$ associated to a generalized
Cartan matrix $A=(a_{ij})_{i,j=1}^{n}$ is the Lie algebra (over
$\mathbb{C}$) generated by $3n$ elements $e_{i}$, $f_{i}$, $h_{i}$,
$(i=1,\cdots,n)$ with the following defining relations:
   $$[h_{i},h_{j}]=0;\ [h_{i},e_{j}]=a_{ij}e_{j};\ [h_{i},f_{j}]=-a_{ij}f_{j}; [e_{i},f_{j}]=\delta_{i,j}h_{i};$$
   $$(ad\ e_{i})^{1-a_{ij}}e_{j}=0,\
   (ad\ f_{i})^{1-a_{ij}}f_{j}=0,\ if\  i\neq j.$$
\end{definition}

Let $A=(a_{i,j})_{1}^{n}$ be a generalized Cartan matrix. For
$0<i,j\leq n$ set $m_{i,j}=2,3,4\ or \ 6$ if $a_{i,j}a_{j,i}=0,1,2\
or \ 3$ respectively and set $m_{i,j}=0$ otherwise. We associate to
$A$ a discrete group $W(A)$ (the $Weyl$ $group$) on $n$ generators
$s_{1},\cdots,s_{n}$ with relations
$\{(s_{i}s_{j})^{m_{i,j}}=1\}_{0<i,j\leq n}$.

As $ad\ e_{i}$ and $ad\ f_{i}$ are locally nilpotent endomorphisms
of $\mathfrak{g}(A)$, the expressions
$exp(e_{i})=\sum_{n\geq0}\frac{(ad\ e_{i})^{n}}{n!}$ and
$exp(f_{i})=\sum_{n\geq0}\frac{(ad\ f_{i})^{n}}{n!}$ make sense. Set
$s'_{i}=exp(e_{i})exp(-f_{i})exp(e_{i})\in Aut(\mathfrak{g}(A))$ and
let $W'(A)$ be the subgroup of $Aut(\mathfrak{g}(A))$ generated by
the $s'_{i}$. The map $s'_{i}\rightarrow s_{i}$ extends to a group
homomorphism $\phi:W'(A)\rightarrow W(A)$.

Let $V$ be the vector space over $\mathbb{Q}$, with basis
$\{a_{i}\}_{i=1,\cdots,n}$ and let $W(A)$ act on $V$ by
$s_{i}(a_{j})=a_{j}-a_{i,j}a_{i}$. $Real$ $roots$ of
$A=(a_{i,j})_{1}^{n}$ are defined to be elements of $V$ of the form
$w(a_{i})$, with $w\in W(A)$ and $0<i\leq n$. Each real root $a$ is
an integral linear combination of $\{a_{i}\}$, the coefficients of
which of all positive or negative; the real root $a$ is said to be
$positive$ or $negative$ accordingly. Denote by $\triangle$,
$\triangle_{+}$, $\triangle_{-}$ the sets of all real roots,
positive and negative real roots respectively. We say that a set of
real roots $\theta$ is $prenilpotent$ if there exist $w,w'\in W(A)$
such that all elements of $w(\theta)$ are positive and all elements
of $w'(\theta)$ are negative; if, moreover, $a,b\in\theta$ and
$a+b\in\triangle$ imply $a+b\in\theta$, then we said that $\theta$
is $nilpotent$.

For $0<i\leq n$ and $w'\in W'(A)$, the pair of opposite elements
$w'\{e_{i},-e_{i}\}\subset\mathfrak{g}(A)$ depends only on the real
root $a=\phi(w')(a_{i})$ (see \cite{ti} for the proof of this
claim); set $E_{a}=w'\{e_{i},-e_{i}\}$ and denote by $L_{a}$ the
$\mathbb{C}$-subalgebra of $\mathfrak{g}(A)$ generated by $E_{a}$.

For each real root $a$, we denote by $\mathfrak{U}_{a}$ the group
scheme over $\mathbb{Z}$ isomorphic to $Spec\ \mathbb{Z}$ and whose
Lie algebra is the $\mathbb{Z}$-subalgebra of $\mathfrak{g}(A)$
generated by $E_{a}$.

Let $\theta$ be a nilpotent set of real roots, then
$L_{\theta}=\bigoplus_{a\in\theta}L_{a}$ is a nilpotent Lie algebra.
Let $U_{\theta}$ be the unipotent complex algebraic group whose Lie
algebra is $L_{\theta}$. The following proposition was proved in
\cite{ti}.

\begin{proposition}
There exist a uniquely defined group scheme $\mathfrak{U}_{\theta}$
over $\mathbb{Z}$ containing all $\mathfrak{U}_{a}$ for
$a\in\theta$, whose fibre over $\mathbb{C}$ is the group
$U_{\theta}$ and such that for any order on $\theta$, the product
morphism $\prod_{a\in\theta}\rightarrow\mathfrak{U}_{\theta}$ is an
isomorphism of the underlying schemes.
\end{proposition}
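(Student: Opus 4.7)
The plan is to establish existence by an explicit construction and then deduce uniqueness from the requirement that the product morphism be an isomorphism of schemes. Fix an arbitrary total order $a_{1}<a_{2}<\cdots<a_{N}$ on $\theta$ (finite because $\theta$ is prenilpotent, hence contained in a finite $W$-translate of $\triangle_{+}$, which meets the positive cone in finitely many elements due to nilpotency). Define the underlying $\mathbb{Z}$-scheme of $\mathfrak{U}_{\theta}$ to be the product $\prod_{i=1}^{N}\mathfrak{U}_{a_{i}}\cong \mathbb{A}^{N}_{\mathbb{Z}}$. The task is then to equip this affine space with a group-scheme structure whose $\mathbb{C}$-points recover $U_{\theta}$.

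The core step is to establish integral Chevalley-type commutator relations inside $\mathfrak{U}_{\theta}$. For any prenilpotent pair $\{a,b\}\subset\theta$ with $a<b$, one has the identity
\[
[u_{a}(x),u_{b}(y)]=\prod_{\substack{i,j\geq 1\\ ia+jb\in\theta}} u_{ia+jb}\bigl(C_{ij}^{ab}\,x^{i}y^{j}\bigr),
\]
where $u_{a}\colon \mathfrak{U}_{a}\xrightarrow{\sim}\mathbb{G}_{a}$ are the fixed parametrizations, the product is taken in a chosen order on the (finite) set of positive integral combinations lying in $\theta$, and the structure constants $C_{ij}^{ab}$ are integers intrinsic to $\mathfrak{g}(A)$. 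I would derive these constants from the exponential formula inside $U_{\theta}$ (complex side) using the Baker–Campbell–Hausdorff series, which terminates because $L_{\theta}$ is nilpotent, and verify their integrality by invoking the existence of the $\mathbb{Z}$-form of $L_{\theta}$ generated by the $E_{a}$'s, together with Tits' analysis (\cite{ti}) of divided-power integrality for Kac-Moody root data; this reduces to the classical Chevalley case via the rank-$2$ subalgebras attached to each prenilpotent pair, which are always finite-type.

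Granting these relations, I would define the multiplication on $\mathfrak{U}_{\theta}=\prod \mathfrak{U}_{a_{i}}$ by the unique formula that becomes correct on $\mathbb{C}$-points after repeatedly applying the commutator identity to rewrite any word in the $u_{a_{i}}(\,\cdot\,)$ in the fixed order $a_{1}<\cdots<a_{N}$. Polynomial identities with integer coefficients then give morphisms of $\mathbb{Z}$-schemes $m,i,e$; associativity, existence of identity, and existence of inverse can be checked after base change to $\mathbb{C}$, where they hold in $U_{\theta}$, since a morphism of reduced affine schemes of finite type over $\mathbb{Z}$ is determined by its complex points. Independence of the initial ordering then follows: reordering $a_{i_{1}}<\cdots<a_{i_{N}}$ produces an \emph{a priori} different group-scheme structure on $\mathbb{A}^{N}_{\mathbb{Z}}$, but the identity map between the two induces an isomorphism on $\mathbb{C}$-points (by the commutator relations inside $U_{\theta}$), hence an isomorphism of group schemes.

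For uniqueness of $\mathfrak{U}_{\theta}$, suppose $\mathfrak{U}'$ is another such scheme. The inclusions $\mathfrak{U}_{a}\hookrightarrow\mathfrak{U}'$ combine, via the multiplication of $\mathfrak{U}'$, into a morphism $\prod_{a\in\theta}\mathfrak{U}_{a}\to\mathfrak{U}'$; by hypothesis this is an isomorphism of schemes, and it is compatible with the two group structures on $\mathbb{C}$-points, hence globally. The main obstacle in the whole argument is the integrality of the commutator constants $C_{ij}^{ab}$: once this is in hand, everything else is a formal manipulation of polynomial identities and a descent from $\mathbb{C}$ to $\mathbb{Z}$. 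For this, I would rely on the reduction to finite-type rank-$2$ subgroups, where Chevalley's original construction and the Kostant $\mathbb{Z}$-form supply the required integers.
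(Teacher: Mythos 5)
The paper does not actually prove this proposition: it states explicitly that the result ``was proved in \cite{ti},'' so there is no internal argument to compare yours against. Your outline is, in substance, a reconstruction of Tits' own construction --- fix an order, take $\prod_{a\in\theta}\mathfrak{U}_{a}\cong\mathbb{A}^{N}_{\mathbb{Z}}$ as the underlying scheme, transport the group law of $U_{\theta}$ from $\mathbb{C}$ down to $\mathbb{Z}$ via integral commutator constants --- and as a roadmap it is the right one.

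Two caveats, the first of which is a genuine gap. The entire mathematical content of the proposition is concentrated in the integrality of the structure constants $C^{ab}_{ij}$ (and of the coefficients that appear when a product is rewritten in a different order), and that is exactly the step you outsource to ``Tits' analysis'' and to the Chevalley--Kostant $\mathbb{Z}$-form; since the proposition is itself the packaging of that analysis, your argument is not self-contained at its only nontrivial point. Moreover the reduction to rank-two finite-type subsystems needs justification: you must know that for a prenilpotent pair $\{a,b\}$ the set $\vartheta(a,b)$ is finite and spans a finite-type rank-$\leq 2$ root system with unbroken root strings --- otherwise the product in your commutator formula could involve roots $ia+jb$ that lie in $\triangle$ but not in $\theta$, because nilpotency as defined only closes $\theta$ under single sums $a+b$, not under arbitrary $\mathbb{N}$-combinations. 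Second, the principle you invoke --- that a morphism of reduced affine schemes of finite type over $\mathbb{Z}$ is determined by its complex points --- is false in that generality ($\mathrm{Spec}\ \mathbb{F}_{p}$ is reduced, affine, of finite type over $\mathbb{Z}$ and has no complex points); what saves the descent step here is that the schemes in question are affine spaces over $\mathbb{Z}$, hence flat, so that $\mathbb{Z}[x_{1},\dots,x_{N}]$ injects into $\mathbb{C}[x_{1},\dots,x_{N}]$ and polynomial identities may indeed be verified after base change to $\mathbb{C}$. With those two points repaired (or honestly cited), the skeleton of your argument matches the source the paper relies on.
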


Now we present Tits' definition of Kac-Moody group associated to a
generalized Cartan matrix $A=(a_{i,j})_{i,j=1}^{n}$ and a
commutative ring $R$.

Let $\wedge$ be a free abelian group with basis
$h_{1},\cdots,h_{n}$, and $\wedge'$ its dual, then there are $n$
elements $\alpha_{1},\cdots,\alpha_{n}\in\wedge'$ satisfying
$\langle h_{i},\alpha_{j}\rangle=a_{i,j}$. Set
$\mathfrak{T}(R)=Hom(\wedge',R^{*})$. The group $W(A)$ also acts on
$\wedge'$ by
$s_{i}(\lambda)=\lambda-\langle\lambda,h_{i}\rangle\alpha_{i}$. The
automorphism of $\mathfrak{T}(R)$ induced by $s_{i}$ will also
denoted by $s_{i}$.

For a real root $a$, and a nilpotent set of real roots $\theta$, set
$\mathfrak{U}_{a}(R)$, $\mathfrak{U}_{\theta}(R)$ to be the groups
of $R$ points of $\mathfrak{U}_{a}\times Spec\ R$ and
$\mathfrak{U}_{\theta}\times Spec\ R$ respectively. For each pair of
roots $\{a,b\}$, set
$\vartheta(a,b)=(\mathbb{N}a+\mathbb{N}b)\cap\bigtriangleup$.

The $Steinberg$ $group$ $\mathfrak{S}(R)$ over $R$ is defined as the
inductive limit of the groups $\mathfrak{U}_{a}(R)$ and
$\mathfrak{U}_{\vartheta(a,b)}(R)$, where $a\in\bigtriangleup$ and
$\{a,b\}$ runs over all prenilpotent pairs of roots, relative to all
the canonical injections
$\mathfrak{U}_{c}(R)\rightarrow\mathfrak{U}_{\vartheta(a,b)}(R)$ for
$c\in\vartheta(a,b)$. For each $0<i\leq n$,
$s'_{i}:=exp(e_{i})exp(-f_{i})exp(e_{i})$ is an automorphism of
$\mathfrak{g}(A)$ which permutes the $L_{a}$ and the $E_{a}$;
therefore, it induces an automorphism of $\mathfrak{S}(R)$ which we
again denote by $s'_{i}$.

\begin{remark}\label{aaa}
For any $a,b$ in a nilpotent set $\theta$ and any $r,r'\in R$, the
following commutation relation holds inside
$\mathfrak{U}_{\theta}(R)$:
$$[x_{a}(r),x_{b}(r')]=\prod_{c=ma+nb}x_{c}(k(a,b;c)r^{m}r'^{n}),$$
where $c=ma+nb$ runs over $\vartheta(a,b)-\{a,b\}$ and
$x_{a}:R\rightarrow\mathfrak{U}_{a}(R)$,
$x_{b}:R\rightarrow\mathfrak{U}_{b}(R)$ denote the isomorphisms
associated to $a$ and $b$.
\end{remark}

\begin{definition}
The Kac-Moody group $G_{A}(R)$ associated to $A$ over $R$ is defined
to be the quotient of the free product of $\mathfrak{S}(R)$ and
$\mathfrak{T}(R)$ by the following relations.

$$tx_{i}(r)t^{-1}=x_{i}(t(\alpha_{i})r);   \  \
\widetilde{s}_{i}t\widetilde{s}_{i}^{-1}=s'_{i}(t);$$
$$ \widetilde{s}_{i}(r^{-1})=\widetilde{s}_{i}r^{h_{i}}  \  for \
r\in R^{*}\ \ \widetilde{s}_{i}u\widetilde{s}_{i}^{-1}=s'_{i}(u),$$

where $t$ is an element from $\mathfrak{T}(R)$, $r$ is an invertible
element of $R$,  $x_{i}:R\rightarrow\mathfrak{U}_{a_{i}}(R)$ and
$x_{-i}:R\rightarrow\mathfrak{U}_{-a_{i}}(R)$ are the isomorphisms
associated to $e_{i}$ and $f_{i}$ respectively,
$\widetilde{s}_{i}(r)$ is the canonical image of
$x_{i}(r)x_{-i}(r^{-1})x_{i}(r)$ in $\mathfrak{S}(R)$,
$\widetilde{s}_{i}=\widetilde{s}_{i}(1)$, and
$r^{h_{i}}\in\mathfrak{T}(R)$ is defined by
$r^{h_{i}}(\lambda)=r^{\langle\lambda,h_{i}\rangle}$ for
$\lambda\in\wedge'$.
\end{definition}
It is easy to see $G_{A}(R)$ is functorial in $R$, we call $G_{A}$
the $Tits$ $functor$ associated to $A=(a_{ij})_{i,j=1}^{n}$. Set
$r=1$ in $ \widetilde{s}_{i}(r^{-1})=\widetilde{s}_{i}r^{h_{i}}$ we
have $\widetilde{s}_{i}^{2}=(-1)^{h_{i}}$, this formula will be used
in the next section.
\begin{remark}
The above defining relations was given in \cite{r}, and is slightly
different from that of \cite{ti}, in fact the formula
$\widetilde{s}_{i}^{2}=(-1)^{h_{i}}$ cannot be derived from the
defining relations in \cite{ti}.
\end{remark}
\begin{remark}
From the defining relations we see that $G_{A}(R)$ (as a group) is
generated by the image of $\mathfrak{U}_{a_{i}}(R)$ in $G_{A}(R)$.
\end{remark}
In \S3 we need the following lemma.
\begin{lemma}
Let $A$ be a Cartan matrix of type

\begin{picture}(100,20)
    % Circles
    \put(30,0){\circle*{4}}
    \put(60,0){\circle*{4}}
    % Lines
    \put(30,0){\line(1,0){30}}
% Labels
    \put(30,-12){$e_{1}$}
    \put(60,-12){$e_{2}$}
\put(0,0){$A_{2}$} \put(70,0){,}
% Circles
\put(110,0){$B_{3}$}
 \put(140,0){\circle*{4}}
  \put(170,0){\circle*{4}}
    \put(200,0){\circle*{4}}
% Lines
\put(140,0){\line(1,0){30}}
    \put(170,1){\line(1,0){30}}
    \put(170,-1){\line(1,0){30}}
% Labels
\put(140,-12){$e_{1}$}
    \put(170,-12){$e_{2}$}
\put(200,-12){$e_{3}$}
 \put(180,-2.5){$>$}

\put(220,0){or}
% Circles
\put(250,0){$C_{3}$}
 \put(280,0){\circle*{4}}
  \put(310,0){\circle*{4}}
    \put(340,0){\circle*{4}}
% Lines
\put(280,0){\line(1,0){30}}
    \put(310,1){\line(1,0){30}}
    \put(310,-1){\line(1,0){30}}
% Labels
 \put(280,-12){$e_{1}$}
    \put(310,-12){$e_{2}$}
\put(340,-12){$e_{3}$}
 \put(320,-2.5){$<$}
\end{picture}\\

respectively, then the corresponding Kac-Moody group satisfies
$G_{A}(R)=[G_{A}(R),G_{A}(R)]$.
\end{lemma}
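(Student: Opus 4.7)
The plan is to show every simple root subgroup $\mathfrak{U}_{\pm a_i}(R)$ lies in $[G_A(R),G_A(R)]$, then dispose of the torus $\mathfrak{T}(R)$; by the preceding remark these together generate $G_A(R)$, forcing $G_A(R)\subseteq[G_A(R),G_A(R)]$. The workhorse is the commutator formula of Remark~\ref{aaa}, which expands $[x_\alpha(r),x_\beta(s)]$ as a product of $x_{m\alpha+n\beta}(C_{mn}r^m s^n)$ for explicit integer $C_{mn}$.

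The routine step is: whenever a root $\alpha$ admits a decomposition $\alpha=\beta+\gamma$ with $\vartheta(\beta,\gamma)=\{\beta,\gamma,\alpha\}$ and $C_{11}=\pm 1$, the formula gives $x_\alpha(r)=[x_\beta(\pm r),x_\gamma(1)]\in[G_A(R),G_A(R)]$. This dispatches every positive root of $A_2$ via $a_1=(a_1+a_2)+(-a_2)$ and $a_2=(a_1+a_2)+(-a_1)$; and in $B_3$ and $C_3$ it handles every root lying in the simply-laced sub-$A_2$ generated by $\{a_1,a_2\}$, as well as further roots involving $a_3$, such as $a_1+2a_2+a_3=e_1+e_2$ (in $C_3$) obtained from $[x_{a_1+a_2}(r),x_{a_2+a_3}(s)]$. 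Negative simple root subgroups then follow by Weyl conjugation $\widetilde{s}_i x_{a_i}(r)\widetilde{s}_i^{-1}=x_{-a_i}(\pm r)$ and the normality of the commutator subgroup.

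The main obstacle is the doubly-bonded simple root $a_3$ (short in $B_3$, long in $C_3$): its natural decomposition $a_3=(a_2+a_3)+(-a_2)$ produces either an extra term in the $\vartheta$-expansion (in $B_3$) or a Chevalley coefficient of $\pm 2$ (in $C_3$), so no single commutator isolates $x_{a_3}(r)$ for arbitrary $r$. I resolve this by a ``peel-off'' argument: pick a prenilpotent pair $\{\beta,\gamma\}$ whose expansion is $[x_\beta(1),x_\gamma(t)]=x_{a_3}(\pm t^k)\cdot x_\delta(\pm t^\ell)$, where $\delta$ is a root whose subgroup has already been placed in $[G_A(R),G_A(R)]$; one then solves for $x_{a_3}$. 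For $C_3$ use $\{\beta,\gamma\}=\{a_1,\,2a_2+a_3\}$: the expansion equals $x_{a_1+2a_2+a_3}(\pm t)\cdot x_{2a_1+2a_2+a_3}(\pm t)$, and its first factor, the short root $a_1+2a_2+a_3=e_1+e_2$, has already been handled, so $x_{2a_1+2a_2+a_3}(t)=x_{2e_1}(t)\in[G_A(R),G_A(R)]$; Weyl conjugation by $\widetilde{s}_1\widetilde{s}_2$ then transports $x_{2e_1}$ to $x_{2e_3}=x_{a_3}$. For $B_3$ use $\{\beta,\gamma\}=\{e_2+e_3,-e_2\}$: the expansion equals $x_{e_3}(\pm t)\cdot x_{-a_2}(\pm t^2)$, and since $x_{-a_2}$ is already in the commutator subgroup, $x_{a_3}=x_{e_3}$ follows.

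Finally, the torus: the defining relation $\widetilde{s}_i(r^{-1})=\widetilde{s}_i\,r^{h_i}$ rearranges to $r^{h_i}=\widetilde{s}_i^{-1}\widetilde{s}_i(r^{-1})$, a product of $x_{\pm a_i}$-factors, each already in $[G_A(R),G_A(R)]$; since the $r^{h_i}$ generate $\mathfrak{T}(R)$, the proof is complete. The only delicate point throughout is the $\pm 2$ coefficient obstruction for the doubly-bonded simple root, circumvented by the peel-off trick.
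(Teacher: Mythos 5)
Your proof is correct and takes essentially the same route as the paper's: both rest on the commutation relations of Remark \ref{aaa} to place root subgroups in the derived group, on Weyl conjugation together with normality of $[G_A(R),G_A(R)]$ to move within a length class, and on the same ``peel-off'' trick across the double bond (the paper's version being $[x_{e_3}(r),x_{e_2}(1)]=x_{e_2+e_3}(-r)\,x_{e_2+2e_3}(-r)$ with one factor already known, so the other follows). The only difference is cosmetic: you choose slightly different root pairs and you explicitly dispose of the torus and the negative simple root subgroups, whereas the paper absorbs that step into the generation statement of Remark 2.7.
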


\begin{proof}
In the case of $A_{2}$, we have the commutation relation
$[x_{e_{1}}(1),x_{e_{2}}(r)]=x_{e_{1}+e_{2}}(r)$, hence the image of
$\mathfrak{U}_{e_{1}+e_{2}}(R)$ lies in $[G_{A}(R),G_{A}(R)]$. But
the Weyl group acts transitively on the real roots, hence the image
of $\mathfrak{U}_{e_{1}}(R)$ and $\mathfrak{U}_{e_{2}}(R)$ lies in
$[G_{A}(R),G_{A}(R)]$ too. Thus by Remark 2.7, we have
$G_{A}(R)=[G_{A}(R),G_{A}(R)]$.

In the case of $C_{3}$, the above proof shows that the image of
$\mathfrak{U}_{e_{1}}(R)$ and $\mathfrak{U}_{e_{2}}(R)$ lies in
$[G_{A}(R),G_{A}(R)]$. A direct computation shows that in
$\mathfrak{U}_{\vartheta(e_{2},e_{3})}(R)$ we have
$[x_{e_{3}}(r),x_{e_{2}}(1)]=x_{e_{2}+e_{3}}(-r)x_{e_{2}+2e_{3}}(-r)$,
As the Weyl group acts transitively on the set of loot roots, we
have $\mathfrak{U}_{e_{2}+2e_{3}}(R)$ lies in $[G_{A}(R),G_{A}(R)]$
and so is $\mathfrak{U}_{e_{2}+e_{3}}(R)$. But the Weyl group acts
transitively on the set of short roots too, hence
$\mathfrak{U}_{e_{3}}(R)$ also lies in $[G_{A}(R),G_{A}(R)]$. By
Remark 2.7 again, we have $G_{A}(R)=[G_{A}(R),G_{A}(R)]$. The proof
for the case of $B_{3}$ is similar.
\end{proof}

\section{Construction of infinite loop spaces associated to $A_{2l-1}^{(2)}$}
As shown in \cite{ka} there are seven infinite classes of
generalized Cartan matrices of affine type, whose Dynkin diagrams
are listed below.

\begin{picture}(200,60)
    \put(0,0){\circle*{4}}
    \put(30,0){\circle*{4}}
    \put(60,0){\circle*{4}}
    \put(0,30){\circle*{4}}
    \put(150,0){\circle*{4}}
    \put(180,0){\circle*{4}}
    \put(0,0){\line(1,0){30}}
     \put(0,0){\line(0,1){30}}
    \put(30,0){\line(1,0){30}}
    \put(60,0){\line(1,0){30}}
    \put(120,0){\line(1,0){30}}
    \put(150,0){\line(1,0){30}}
    \put(0,30){\line(6,-1){180}}
    % Labels
    \put(-2,-8){$a_{1}$}
    \put(28,-8){$a_{2}$}
    \put(58,-8){$a_{3}$}
    \put(-5,35){$a_{0}$}
    \put(148,-8){$a_{l-1}$}
    \put(178,-8){$a_{l}$}
    \put(270,10){$A_{l}^{(1)}$}
    %   arrow
     \put(98,-2.8){$\cdots$}
\end{picture}

\begin{picture}(200,60)
    \put(0,0){\circle*{4}}
    \put(30,0){\circle*{4}}
    \put(60,0){\circle*{4}}
    \put(30,30){\circle*{4}}
    \put(150,0){\circle*{4}}
    \put(180,0){\circle*{4}}
    \put(0,0){\line(1,0){30}}
    \put(30,0){\line(1,0){30}}
    \put(30,0){\line(0,1){30}}
    \put(60,0){\line(1,0){30}}
    \put(120,0){\line(1,0){30}}
    \put(150,1){\line(1,0){30}}
    \put(150,-1){\line(1,0){30}}
    % Labels
    \put(-2,-8){$a_{1}$}
    \put(28,-8){$a_{2}$}
    \put(58,-8){$a_{3}$}
    \put(25,35){$a_{0}$}
    \put(148,-8){$a_{l-1}$}
    \put(178,-8){$a_{l}$}
    \put(270,10){$B_{l}^{(1)}$}
    %   arrow
    \put(160,-2.5){$>$}
     \put(98,-2.8){$\cdots$}
\end{picture}

\begin{picture}(200,60)
    \put(0,0){\circle*{4}}
    \put(30,0){\circle*{4}}
    \put(60,0){\circle*{4}}
    \put(150,0){\circle*{4}}
    \put(180,0){\circle*{4}}
    \put(0,1){\line(1,0){30}}
    \put(0,-1){\line(1,0){30}}
    \put(30,0){\line(1,0){30}}
    \put(60,0){\line(1,0){30}}
    \put(120,0){\line(1,0){30}}
    \put(150,1){\line(1,0){30}}
    \put(150,-1){\line(1,0){30}}
    % Labels
    \put(-2,-8){$a_{0}$}
    \put(28,-8){$a_{1}$}
    \put(58,-8){$a_{2}$}
    \put(148,-8){$a_{l-1}$}
    \put(178,-8){$a_{l}$}
    \put(270,10){$C_{l}^{(1)}$}
    %   arrow
    \put(10,-2.5){$>$}
    \put(160,-2.5){$<$}
     \put(98,-2.8){$\cdots$}
\end{picture}

\begin{picture}(200,60)
    \put(0,0){\circle*{4}}
    \put(30,0){\circle*{4}}
    \put(60,0){\circle*{4}}
    \put(30,30){\circle*{4}}
    \put(150,0){\circle*{4}}
    \put(150,30){\circle*{4}}
    \put(180,0){\circle*{4}}
    \put(0,0){\line(1,0){30}}
    \put(30,0){\line(1,0){30}}
    \put(30,0){\line(0,1){30}}
    \put(60,0){\line(1,0){30}}
    \put(120,0){\line(1,0){30}}
    \put(150,0){\line(1,0){30}}
    \put(150,0){\line(0,1){30}}
    % Labels
    \put(-2,-8){$a_{1}$}
    \put(28,-8){$a_{2}$}
    \put(58,-8){$a_{3}$}
    \put(25,35){$a_{0}$}
    \put(148,-8){$a_{l-1}$}
    \put(178,-8){$a_{l}$}
    \put(145,35){$a_{l+1}$}
    \put(270,10){$D_{l+1}^{(1)}$}
    %   arrow
     \put(98,-2.8){$\cdots$}
\end{picture}

\begin{picture}(200,60)
    \put(0,0){\circle*{4}}
    \put(30,0){\circle*{4}}
    \put(60,0){\circle*{4}}
    \put(150,0){\circle*{4}}
    \put(180,0){\circle*{4}}
    \put(0,1){\line(1,0){30}}
    \put(0,-1){\line(1,0){30}}
    \put(30,0){\line(1,0){30}}
    \put(60,0){\line(1,0){30}}
    \put(120,0){\line(1,0){30}}
    \put(150,1){\line(1,0){30}}
    \put(150,-1){\line(1,0){30}}
    % Labels
    \put(-2,-8){$a_{0}$}
    \put(28,-8){$a_{1}$}
    \put(58,-8){$a_{2}$}
    \put(148,-8){$a_{l-1}$}
    \put(178,-8){$a_{l}$}
    \put(270,10){$A_{2l}^{(1)}$}
    %   arrow
    \put(10,-2.5){$<$}
    \put(160,-2.5){$<$}
     \put(98,-2.8){$\cdots$}
\end{picture}

\begin{picture}(200,60)
    \put(0,0){\circle*{4}}
    \put(30,0){\circle*{4}}
    \put(60,0){\circle*{4}}
    \put(30,30){\circle*{4}}
    \put(150,0){\circle*{4}}
    \put(180,0){\circle*{4}}
    \put(0,0){\line(1,0){30}}
    \put(30,0){\line(1,0){30}}
    \put(30,0){\line(0,1){30}}
    \put(60,0){\line(1,0){30}}
    \put(120,0){\line(1,0){30}}
    \put(150,1){\line(1,0){30}}
    \put(150,-1){\line(1,0){30}}
    % Labels
    \put(-2,-8){$a_{1}$}
    \put(28,-8){$a_{2}$}
    \put(58,-8){$a_{3}$}
    \put(25,35){$a_{0}$}
    \put(148,-8){$a_{l-1}$}
    \put(178,-8){$a_{l}$}
    \put(270,10){$A_{2l-1}^{(2)}$}
    %   arrow
    \put(160,-2.5){$<$}
     \put(98,-2.8){$\cdots$}
\end{picture}

\begin{picture}(200,60)
    \put(0,10){\circle*{4}}
    \put(30,10){\circle*{4}}
    \put(60,10){\circle*{4}}
    \put(150,10){\circle*{4}}
    \put(180,10){\circle*{4}}
    \put(0,11){\line(1,0){30}}
    \put(0,9){\line(1,0){30}}
    \put(30,10){\line(1,0){30}}
    \put(60,10){\line(1,0){30}}
    \put(120,10){\line(1,0){30}}
    \put(150,11){\line(1,0){30}}
    \put(150,9){\line(1,0){30}}
    % Labels
    \put(-2,2){$a_{0}$}
    \put(28,2){$a_{1}$}
    \put(58,2){$a_{2}$}
    \put(148,2){$a_{l-1}$}
    \put(178,2){$a_{l}$}
    \put(270,20){$D_{l+1}^{(2)}$}
    %   arrow
    \put(10,7.5){$<$}
    \put(160,7.5){$>$}
     \put(98,7.2){$\cdots$}
\end{picture}

To each infinite class and any commutative ring $R$ we want to
associate a sequence of Kac-Moody groups $G(n)$ that satisfies the
conditions of Theorem \ref{th1}. First consider the case of
$A_{2l-1}^{(2)}$, let $\mathfrak{g}_{l}$ and $G_{l}(R)$ be the
corresponding Kac-Moody algebra and group respectively. In the
following we use the notations of \S2 freely, sometimes the
subscript $l$ will be used to indicate that the notations are
associated to $A_{2l-1}^{(2)}$. For example, $V_{l}$ will be the
vector space over $\mathbb{Q}$, with basis
$\{a_{i}\}_{i=0,\cdots,l}$. The group $W_{l}(A)$ acts on $V_{l}$ and
$\triangle_{l}$ denotes the set of real roots of $A_{2l-1}^{(2)}$

In $\mathfrak{g}_{l+1}$ set $e_{l}'=s'_{l}(e_{l+1})$,
$f_{l}'=s'_{l}(f_{l+1})$, $h_{l}'=s'_{l}(h_{l+1})=h_{l+1}+h_{l}$
respectively and for $i<l$ set $e_{i}'=e_{i}$, $f_{i}'=f_{i}$,
$h_{i}'=h_{i}$ respectively.
\begin{lemma}
In $\mathfrak{g}_{l+1}$ we have, for $i,j\leq l$,
 $$[h_{i}',h_{j}']=0;\ [h_{i}',e_{j}']=a_{ij}e_{j}';\ [h_{i}',f_{j}']=-a_{ij}f_{j}'; [e_{i}',f_{j}']=\delta_{i,j}h_{i}';$$
   $$(ad\ e_{l-1})^{3}e_{l}'=0;\
   (ad\ f_{l-1})^{3}f_{l}'=0.$$
\end{lemma}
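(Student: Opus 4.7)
My plan is to verify the claimed relations by case analysis on the indices $i, j$, using two main ingredients: the fact that the Cartan matrices of $\mathfrak{g}_l = \mathfrak{g}(A_{2l-1}^{(2)})$ and $\mathfrak{g}_{l+1} = \mathfrak{g}(A_{2l+1}^{(2)})$ coincide on the index range $\{0, 1, \dots, l-1\}$, and the fact that $s_l'$ is an automorphism of $\mathfrak{g}_{l+1}$. When both $i, j < l$, the primed generators equal the unprimed ones, and the target Cartan integers agree with those of $\mathfrak{g}_{l+1}$, so the four bracket relations reduce to the defining relations of $\mathfrak{g}_{l+1}$. When $i = j = l$, I apply $s_l'$ to the defining relations for $e_{l+1}, f_{l+1}, h_{l+1}$: for instance $[h_l', e_l'] = s_l'([h_{l+1}, e_{l+1}]) = 2 e_l'$, matching $a'_{l,l} = 2$, and similarly $[e_l', f_l'] = s_l'(h_{l+1}) = h_l'$.

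For the mixed cases (exactly one of $i, j$ equals $l$), I use the identity $[X, s_l'(Y)] = s_l'([s_l'^{-1}(X), Y])$ together with the Weyl reflection formula $s_l(h_i) = h_i - a_{i, l} h_l$. For $i < l - 1$, both $a_{i, l}$ and $a_{i, l+1}$ vanish in $\mathfrak{g}_{l+1}$, so the relevant brackets vanish, matching the vanishing Cartan integers of $A_{2l-1}^{(2)}$ at these positions. For $i = l - 1$, explicit computation using $a_{l-1, l} = -1$ and $a_{l, l+1} = -2$ in $\mathfrak{g}_{l+1}$ yields $[h_{l-1}, e_l'] = -2 e_l'$ and $[h_l', e_{l-1}] = -e_{l-1}$, in agreement with $a'_{l-1, l} = -2$ and $a'_{l, l-1} = -1$ in $A_{2l-1}^{(2)}$. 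The commutators $[e_i, f_l']$ with $i < l$ vanish because their weight $a_i - s_l(a_{l+1})$ is not a root of $\mathfrak{g}_{l+1}$; the corresponding statements with $f$'s follow by parallel computation.

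The main obstacle will be the Serre relations $(ad\, e_{l-1})^3 e_l' = 0$ and $(ad\, f_{l-1})^3 f_l' = 0$, which do not reduce immediately to relations of $\mathfrak{g}_{l+1}$ via $s_l'$ because $s_l'^{-1}(e_{l-1})$ is not a simple-root generator. My plan is to compute $e_l'$ explicitly. Applying the truncated exponentials defining $s_l'$, which terminate thanks to $(ad\, e_l)^3 e_{l+1} = 0$ and $(ad\, e_{l+1})^2 e_l = 0$ in $\mathfrak{g}_{l+1}$, one obtains $e_l' = \tfrac{1}{2}[e_l, [e_l, e_{l+1}]]$. Then $(ad\, e_{l-1})^3$ applied to this element is unwound by iterated Jacobi identities; each surviving term acquires a factor of $(ad\, e_{l-1})^2 e_l = 0$ (the Serre relation for the single bond between nodes $l-1$ and $l$ in $\mathfrak{g}_{l+1}$), $(ad\, e_l)^2 e_{l-1} = 0$ (the other Serre relation), or $[e_{l-1}, e_{l+1}] = 0$ (since $l-1$ and $l+1$ are non-adjacent in the Dynkin diagram of $A_{2l+1}^{(2)}$), and so vanishes. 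The relation for $f_l'$ follows by the symmetric argument with $f$'s in place of $e$'s.
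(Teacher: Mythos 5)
Your proposal is correct, and for the first four bracket relations it amounts to the same ``direct computation'' the paper invokes (weight/Cartan-integer bookkeeping through the automorphism $s_l'$). Where you genuinely diverge is in the Serre relations $(\mathrm{ad}\,e_{l-1})^3e_l'=0$ and $(\mathrm{ad}\,f_{l-1})^3f_l'=0$. The paper's argument is representation-theoretic: it restricts the adjoint representation of $\mathfrak{g}_{l+1}$ to the $sl_2$-triple $(e_{l-1},f_{l-1},h_{l-1})$, observes that $e_l'$ is a lowest-weight vector of weight $-2$ (using $[f_{l-1},e_l']=0$, which follows because $s_l(a_{l+1})-a_{l-1}$ has coefficients of mixed sign and so is not a root), and concludes $(\mathrm{ad}\,e_{l-1})^3e_l'=0$ from integrability of the adjoint module. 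You instead compute $e_l'=\tfrac12[e_l,[e_l,e_{l+1}]]$ explicitly and kill $(\mathrm{ad}\,e_{l-1})^3$ term by term via the Leibniz rule, using $(\mathrm{ad}\,e_{l-1})^2e_l=0$ and $[e_{l-1},e_{l+1}]=0$; this works (each term of the binomial expansion indeed acquires one of these vanishing factors). The trade-off: the paper's route needs no explicit formula for $e_l'$ and only the two facts $[h_{l-1},e_l']=-2e_l'$, $[f_{l-1},e_l']=0$, both read off from weights, so it generalizes painlessly to the other affine series treated in \S4; your route is more elementary and self-contained (no appeal to integrability of the adjoint representation) but requires verifying the normalization $e_l'=\tfrac12(\mathrm{ad}\,e_l)^2e_{l+1}$ and redoing the Jacobi bookkeeping case by case. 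Both are complete proofs.
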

\begin{proof}
The first four relations follow from direct computations. Now set
$\mathfrak{g}_{l-1}=\mathbb{C}e_{l-1}\oplus\mathbb{C}f_{l-1}\oplus\mathbb{C}h_{l-1}$
and consider $\mathfrak{g}_{l+1}$ as a $\mathfrak{g}_{l-1}$-module
by restricting of the adjoint representation. Since
$[h_{l-1},e_{l}']=-2e_{l}'$ and $[f_{l-1},e_{l}']=0$ (follows from
the fact that every root is either positive or negative), the
representation theory of $\mathfrak{g}_{0}\cong sl_{2}(\mathbb{C})$
implies $(ad\ e_{l-1})^{3}e_{l}'=0$. The proof for the last relation
is exactly the same.
\end{proof}
By the defining relations of $\mathfrak{g}_{l}$, the map
$e_{i}\rightarrow e'_{i}$, $f_{i}\rightarrow f'_{i}$ extends to an
injective Lie algebra homomorphism
$\varphi_{l}:\mathfrak{g}_{l}\rightarrow\mathfrak{g}_{l+1}$.
\begin{lemma}
Define a linear map $\tau_{l}:V_{l}\rightarrow V_{l+1}$ by
$\tau_{l}(a_{i})=a_{i}$ for $i<l$ and
$\tau_{l}(a_{l})=2a_{l}+a_{l+1}$, then
$\tau_{l}(\triangle_{l}^{\pm})\subset\triangle_{l+1}^{\pm}$ and
$\varphi_{l}(E_{a})=E_{\tau_{l}(a)}$ for any $a\in\triangle_{l}$.
\end{lemma}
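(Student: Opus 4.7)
The plan is to reduce the statement, via the Weyl-group description of real roots, to the case of simple roots. Recall $\triangle_l$ is the $W_l$-orbit of the simple roots $\{a_0,\ldots,a_l\}$, and $E_a$ for $a=w(a_i)$ is computed as $w'\{e_i,-e_i\}$ for any lift $w'\in W'_l$ of $w$. So I need two compatibilities: between $\tau_l$ and the Weyl actions, and between $\varphi_l$ and the $W'$-actions.

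\textbf{Weyl intertwining.} Set $\tilde{s}_j=s_j\in W_{l+1}$ for $j<l$ and $\tilde{s}_l=s_l s_{l+1} s_l$. Using $a_{l,l+1}=-2$ and $a_{l+1,l}=-1$ in $\mathfrak{g}_{l+1}$ (forced by $\tau_l(a_l)=2a_l+a_{l+1}$ and $h'_l=h_{l+1}+h_l$ from the previous lemma), a short calculation on each basis vector $a_k$ yields $\tau_l\circ s_j=\tilde{s}_j\circ\tau_l$ on $V_l$. Consequently, for $a=w(a_i)=s_{j_1}\cdots s_{j_k}(a_i)\in\triangle_l$, setting $\tilde{w}=\tilde{s}_{j_1}\cdots\tilde{s}_{j_k}\in W_{l+1}$ gives $\tau_l(a)=\tilde{w}(\tau_l(a_i))$. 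Since $\tau_l(a_i)$ is already a real root of $\mathfrak{g}_{l+1}$ (a simple root for $i<l$, and $\tau_l(a_l)=s_l(a_{l+1})$ for $i=l$), this proves $\tau_l(\triangle_l)\subset\triangle_{l+1}$. Because $\tau_l$ has non-negative coefficients on the basis $\{a_k\}$, the refinement $\tau_l(\triangle_l^\pm)\subset\triangle_{l+1}^\pm$ follows immediately.

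\textbf{Lifting to $W'$ and identifying $E_a$.} Lift each generator of $W_l$ by setting $\tilde{s}'_j=s'_j\in W'_{l+1}$ for $j<l$ and $\tilde{s}'_l=s'_l s'_{l+1}(s'_l)^{-1}$, so that $\phi(\tilde{s}'_j)=\tilde{s}_j$. The key claim is that $\varphi_l\circ s'_j=\tilde{s}'_j\circ\varphi_l$ on $\mathfrak{g}_l$. For $j<l$ this is immediate since $\varphi_l$ fixes $e_j,f_j$. For $j=l$ it reduces to the identity $\exp(s'_l(X))=s'_l\exp(X)(s'_l)^{-1}$ applied to $X=e_{l+1}$ and $X=f_{l+1}$, combined with the definitions $e'_l=s'_l(e_{l+1})$, $f'_l=s'_l(f_{l+1})$. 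Multiplying these intertwinings gives, for $a=w(a_i)\in\triangle_l$ and any lift $w'\in W'_l$, an automorphism $\widetilde{w'}$ of $\mathfrak{g}_{l+1}$ (the corresponding word in the $\tilde{s}'_j$) satisfying $\widetilde{w'}\circ\varphi_l=\varphi_l\circ w'$. Therefore
\[
\varphi_l(E_a)=\varphi_l\bigl(w'\{e_i,-e_i\}\bigr)=\widetilde{w'}\bigl(\{e'_i,-e'_i\}\bigr).
\]
The pair $\{e'_i,-e'_i\}$ equals $E_{\tau_l(a_i)}$ in $\mathfrak{g}_{l+1}$ in either case (trivially for $i<l$, and via $\{e'_l,-e'_l\}=s'_l\{e_{l+1},-e_{l+1}\}=E_{s_l(a_{l+1})}$ for $i=l$). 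Applying $\widetilde{w'}$ and using the Weyl intertwining yields $E_{\tilde{w}(\tau_l(a_i))}=E_{\tau_l(w(a_i))}=E_{\tau_l(a)}$.

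\textbf{Main obstacle.} The delicate point is the $W'$-intertwining for $j=l$: one must verify that conjugation by $s'_l s'_{l+1}(s'_l)^{-1}$ on $\mathfrak{g}_{l+1}$ genuinely coincides with the $\varphi_l$-transport of $s'_l$ acting on $\mathfrak{g}_l$. The computation uses the formulas for $e'_l,f'_l,h'_l$ from the previous lemma together with the identity $\widetilde{s}_i^{\,2}=(-1)^{h_i}$ noted after the definition of $G_A(R)$, which controls the sign bookkeeping on the torus so that the intertwining holds not merely on root vectors but also on the Cartan.
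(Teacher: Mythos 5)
Your proof follows essentially the same route as the paper's: intertwine $\tau_l$ with the Weyl groups via $s_l\mapsto s_l s_{l+1} s_l$, lift to $W'$ via $s'_l\mapsto s'_l s'_{l+1}(s'_l)^{-1}$, check compatibility with $\phi$ and with $\varphi_l$, and then unwind the definition of $E_a$ along a word in the generators. The only stray remark is your appeal to $\widetilde{s}_i^{\,2}=(-1)^{h_i}$, which is a relation in the group $G_A(R)$ and plays no role in this purely Lie-algebraic statement; the identity $\exp(\mathrm{ad}\,s'_l(X))=s'_l\exp(\mathrm{ad}\,X)(s'_l)^{-1}$ already suffices for the $j=l$ intertwining.
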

\begin{proof}
It is easy to see that the map $s_{i}\rightarrow s_{i}$ for $i<l$
and $s_{l}\rightarrow s_{l}s_{l+1}s_{l}$ extends to a group
homomorphism $w_{l}:W_{l}(A)\rightarrow W_{l+1}(A)$ and for any
$v\in V_{l}$ and $W\in W_{l}(A)$ we have $\tau_{l}\cdot
W(v)=w_{l}(W)\cdot\tau_{l}(v)$. Thus the first assertion follows
readily. Similarly, the map $s'_{i}\rightarrow s'_{i}$ for $i<l$ and
$$s'_{l}\rightarrow
s'_{l}s'_{l+1}(s'_{l})^{-1}=exp(e'_{i})\cdot exp(-f'_{i})\cdot
exp(e'_{i})$$ extends to a group homomorphism
$w'_{l}:W'_{l}(A)\rightarrow W'_{l+1}(A)$, where $W'_{l}(A)\subseteq
Aut(\mathfrak{g}(A)_{l})$ and $W'_{l+1}(A)\subseteq
Aut(\mathfrak{g}(A)_{l+1})$. One checks that $w_{l}$ and $w'_{l}$
are compatible with the homomorphisms $\phi_{l}:W'_{l}(A)\rightarrow
W_{l}(A)$ and $\phi_{l+1}:W'_{l+1}(A)\rightarrow W_{l+1}(A)$. We
also have for any $\omega'\in W'_{l}(A)$,
$\varphi_{l}\cdot\omega'=w'_{l}(\omega')\cdot\varphi_{l}$. Now we
are ready to prove the second assertion. First, it is true for
$a=a_{i}$, $i\leq l$ by the definition of $\varphi_{l}$. Let
$a=\phi_{l}(\omega')(a_{i})$ be an element of $\triangle_{l}$, with
$\omega'\in W'_{l}(A)$, then $\varphi_{l}(E_{a})=\varphi_{l}\omega'(
E_{a_{i}})=w'_{l}(\omega')\varphi_{l}(E_{a_{i}})=w'_{l}(\omega')(E_{\tau_{l}(a_{i})})
=E_{\phi_{l+1}w'_{l}(\omega')(\tau_{l}(a_{i}))}=
E_{w_{l}\phi_{l}(\omega')(\tau_{l}(a_{i}))}
=E_{\tau_{l}(\phi_{l}(\omega')(a_{i}))}=E_{\tau_{l}(a)}$. This
finishes the proof.
\end{proof}
For any $a\in\triangle_{l}$, let $\mathfrak{U}_{a}$ be the
corresponding group scheme defined in \S2, then we can define a
homomorphism
$\psi_{a}:\mathfrak{U}_{a}\rightarrow\mathfrak{U}_{\tau_{l}(a)}$
that is compatible with the map $E_{a}\rightarrow E_{\tau_{l}(a)}$.
\begin{lemma}
Let $\theta\subset\triangle_{l}$ be a nilpotent set of real roots,
then $\tau_{l}(\theta)\subset\triangle_{l+1}$ is also nilpotent; let
$\mathfrak{U}_{\theta}$ and $\mathfrak{U}_{\tau_{l}(\theta)}$ be the
group schemes in Proposition 2.3, then the homomorphism
$\psi_{a}:\mathfrak{U}_{a}\rightarrow\mathfrak{U}_{\tau_{l}(a)}$ for
$a\in\theta$ extends uniquely to a homomorphism
$\psi_{\theta}:\mathfrak{U}_{\theta}\rightarrow\mathfrak{U}_{\tau_{l}(\theta)}$.
\end{lemma}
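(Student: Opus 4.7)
The plan is to prove the lemma in two stages: first verify that $\tau_l(\theta)$ is nilpotent, then construct $\psi_\theta$ by the scheme-theoretic product decomposition of Proposition 2.3 and check that the commutation relations of Remark \ref{aaa} are respected.

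For the nilpotency of $\tau_l(\theta)$, prenilpotency follows directly from Lemma 3.2: the intertwining $\tau_l\circ w=w_l(w)\circ\tau_l$ together with $\tau_l(\triangle_l^{\pm})\subseteq\triangle_{l+1}^{\pm}$ shows that if $w,w'\in W_l(A)$ witness prenilpotency of $\theta$, then $w_l(w),w_l(w')$ witness it for $\tau_l(\theta)$. For closedness, suppose $\tau_l(a),\tau_l(b)\in\tau_l(\theta)$ and $\tau_l(a)+\tau_l(b)=\tau_l(a+b)\in\triangle_{l+1}$; since $\tau_l$ is injective and $\theta$ is closed, it suffices to show $a+b\in\triangle_l$. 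To see this I would note that the Lie subalgebra of $\mathfrak{g}_l$ generated by $E_{\pm a},E_{\pm b}$ embeds via $\varphi_l$ into the corresponding subalgebra of $\mathfrak{g}_{l+1}$ generated by $E_{\pm\tau_l(a)},E_{\pm\tau_l(b)}$; since $\{a,b\}$ is prenilpotent both subalgebras are finite-dimensional semisimple of rank at most two, and $\tau_l$ identifies their (finite) root systems, so $\tau_l(a+b)\in\triangle_{l+1}$ forces $a+b\in\triangle_l$.

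For the construction, fix a linear ordering on $\theta$ and take the induced ordering on $\tau_l(\theta)$. Proposition 2.3 then gives scheme isomorphisms $\prod_{a\in\theta}\mathfrak{U}_a\cong\mathfrak{U}_\theta$ and $\prod_{a\in\theta}\mathfrak{U}_{\tau_l(a)}\cong\mathfrak{U}_{\tau_l(\theta)}$. Define $\psi_\theta$ as the scheme morphism which under these isomorphisms is the product of the $\psi_a$. Uniqueness is automatic, since the $\mathfrak{U}_a$ generate $\mathfrak{U}_\theta$ as a group. To see that $\psi_\theta$ is a group homomorphism one must check that the defining commutation relations of Remark \ref{aaa} are mapped to the corresponding relations: for every prenilpotent pair $\{a,b\}\subseteq\theta$, I would verify $\vartheta(\tau_l(a),\tau_l(b))=\tau_l(\vartheta(a,b))$ (which is immediate from the rank-two correspondence above) together with the equality of structure constants $k(\tau_l(a),\tau_l(b);\tau_l(c))=k(a,b;c)$ for all $c\in\vartheta(a,b)\setminus\{a,b\}$.

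The main obstacle is this last identification of integral structure constants. Over $\mathbb{C}$ the matching is forced by the fact that $\varphi_l$ is an injective Lie algebra homomorphism carrying each $E_a$ to $E_{\tau_l(a)}$: the nilpotent Lie algebras $L_\theta$ and $L_{\tau_l(\theta)}$ acquire matching bracket structure, and the exponential group laws coincide via Baker--Campbell--Hausdorff. The descent to the $\mathbb{Z}$-form, and hence to an arbitrary commutative ring $R$, requires that $\varphi_l$ preserve the Chevalley $\mathbb{Z}$-lattices defining the $\mathfrak{U}_a$; this should follow from the characterization of $\mathfrak{U}_a$ as the $\mathbb{Z}$-group scheme canonically attached to the generator $E_a$, but is the point in the argument where the most care is needed.
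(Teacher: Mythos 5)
Your proposal is correct and follows essentially the same route as the paper: the paper's entire proof is that $\varphi_{l}$ induces an isomorphism $L_{\theta}\rightarrow L_{\tau_{l}(\theta)}$, so the commutation relations of the $\mathfrak{U}_{a}$ in $\mathfrak{U}_{\theta}$ and of the $\mathfrak{U}_{\tau_{l}(a)}$ in $\mathfrak{U}_{\tau_{l}(\theta)}$ coincide and the extension ``follows readily.'' You simply supply the details the paper omits --- the nilpotency of $\tau_{l}(\theta)$, the identification $\vartheta(\tau_{l}(a),\tau_{l}(b))=\tau_{l}(\vartheta(a,b))$, and the matching of the integral structure constants $k(a,b;c)$ --- and you correctly flag the descent to the $\mathbb{Z}$-form as the one step needing care, a point the paper passes over silently.
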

\begin{proof}By lemma 3.2 the homomorphism
$\varphi_{l}:\mathfrak{g}_{l}\rightarrow\mathfrak{g}_{l+1}$ induces
an isomorphism $L_{\theta}\rightarrow L_{\tau_{l}(\theta)}$. Thus
for $a,b\in\theta$, the commutation relation of $\mathfrak{U}_{a}$
and $\mathfrak{U}_{b}$ in $\mathfrak{U}_{\theta}$ is exactly the
same as that of $\mathfrak{U}_{\tau_{l}(a)}$ and
$\mathfrak{U}_{\tau_{l}(b)}$ in $\mathfrak{U}_{\tau_{l}(\theta)}$.
Now the lemma follows readily.
\end{proof}
By Lemma 3.2 and Lemma 3.3 the group homomorphisms
$\psi_{a}(R):\mathfrak{U}_{a}(R)\rightarrow\mathfrak{U}_{\tau_{l}(a)}(R)$,
$a\in\triangle_{l}$, extend to a group homomorphism
$\psi(R):\mathfrak{S}_{l}(R)\rightarrow\mathfrak{S}_{l+1}(R)$.

Let $\wedge_{l}$ be a free abelian groups with basis
$h_{0},\cdots,h_{l}$ and $\wedge'_{l}$ its dual. Define linear map
$\omega_{l}:\wedge_{l}\rightarrow\wedge_{l+1}$ by
$\omega_{l}(h_{i})=h_{i}$ for $i<l$ and
$\omega_{l}(h_{l})=h_{l}+2h_{l+1}$. Denote by $\omega'_{l}$ the dual
map of $\omega_{l}$, then $\omega'_{l}$ induces a group homomorphism
$\omega_{l}(R):\mathfrak{T}_{l}(R)\rightarrow\mathfrak{T}_{l+1}(R)$.

From the defining relations of Kac-Moody groups and the
constructions of $\psi(R)$ and $\omega_{l}(R)$ we see that the
homomorphism of free products
$\psi\ast\omega_{l}(R):\mathfrak{S}_{l}(R)\ast\mathfrak{T}_{l}(R)\rightarrow
\mathfrak{S}_{l+1}(R)\ast\mathfrak{T}_{l+1}(R)$ reduces to a
homomorphism $g_{l}:G_{l}(R)\rightarrow G_{l+1}(R)$. Set
$G(n):=G_{2n}(R)$ and $f_{n}:=g_{2n+1}\cdot g_{2n}$. In order to
apply Theorem 1.2, we have to define group homomorphism
$\varsigma_{n}:\Sigma_{n}\rightarrow G(n)$ for each $n>0$.

First we need some preliminaries. Let $\overline{W}_{l}$ be the
signed permutation group, i.e., the group of linear transformations
of $\mathbb{R}^{l}$ leaving invariant the set $\{\pm e_{i}\}$ of
standard basis vectors and their negatives. It has $l-1$ generators
$\overline{r}_{1},\cdots,\overline{r}_{l-1}$ and the following
defining relations:
 $$\overline{r}_{j}\overline{r}_{i}^{2}\overline{r}_{j}^{-1}=\overline{r}_{i}^{2}\overline{r}_{j}^{-2a_{i,j}}$$
$$\overline{r}_{i}\overline{r}_{j}\overline{r}_{i}\cdots=\overline{r}_{j}\overline{r}_{i}\overline{r}_{j}\cdots
(m_{i,j}\ factors\ on\ each\ side),$$ where $\overline{r}_{i}$ is
defined by sending $\{e_{i},e_{i+1}\}$ to $\{-e_{i+1},e_{i}\}$ and
leaves the other basis vectors invariant.
\begin{lemma}
The $\widetilde{s}_{i}$, $0<i<l$ in $G_{l}(R)$ satisfy the following
two relations,
$$\widetilde{s}_{j}\widetilde{s}_{i}^{2}\widetilde{s}_{j}^{-1}=\widetilde{s}_{i}^{2}\widetilde{s}_{j}^{-2a_{i,j}},$$
$$\widetilde{s}_{i}\widetilde{s}_{j}\widetilde{s}_{i}\cdots=\widetilde{s}_{j}\widetilde{s}_{i}\widetilde{s}_{j}\cdots
(m_{i,j}\ factors\ on\ each\ side).$$ Let $\widetilde{W}_{l}$ be the
subgroup of $G_{l}(R)$ generated by $\{\widetilde{s}_{i}\}_{0<i<l}$,
then the map $\overline{r}_{i}\rightarrow\widetilde{s}_{i}$ extends
to a group homomorphism
$h_{l}:\overline{W}_{l}\rightarrow\widetilde{W}_{l}$.
\end{lemma}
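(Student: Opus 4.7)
The plan is to verify the two displayed relations directly inside $G_l(R)$ and then invoke the universal presentation of $\overline{W}_l$ on the generators $\overline{r}_i$ to conclude the existence of $h_l$.

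For the first relation I would use the identity $\widetilde{s}_i^{2}=(-1)^{h_i}$ obtained at the end of Section 2, together with the defining relation $\widetilde{s}_j\, t\, \widetilde{s}_j^{-1}=s'_j(t)$ for $t\in\mathfrak{T}(R)$. The automorphism $s'_j$ on $\mathfrak{T}(R)=\mathrm{Hom}(\wedge',R^{*})$ is induced by the reflection $s_j(\lambda)=\lambda-\langle h_j,\lambda\rangle\alpha_j$ on $\wedge'$, so a direct character computation yields $s'_j(r^{h_i})=r^{h_i-a_{i,j}h_j}$ for every $r\in R^{*}$. Specialising to $r=-1$ gives
\[
\widetilde{s}_j\widetilde{s}_i^{2}\widetilde{s}_j^{-1}=(-1)^{h_i}(-1)^{-a_{i,j}h_j}=\widetilde{s}_i^{2}\bigl(\widetilde{s}_j^{2}\bigr)^{-a_{i,j}}=\widetilde{s}_i^{2}\widetilde{s}_j^{-2a_{i,j}},
\]
which is precisely the first displayed relation.

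For the braid relation I would reduce to the rank-two situation. Because the indices are restricted to $0<i,j<l$, the sub-diagram of $A_{2l-1}^{(2)}$ on $\{a_1,\ldots,a_{l-1}\}$ is the $A_{l-1}$ chain, so every value $m_{i,j}$ that occurs equals $2$ or $3$. When $m_{i,j}=2$ one has $a_{i,j}=a_{j,i}=0$, the Chevalley generators $e_i,f_i$ commute with $e_j,f_j$, and therefore the root subgroups commute inside $\mathfrak{S}_l(R)$; this immediately gives $\widetilde{s}_i\widetilde{s}_j=\widetilde{s}_j\widetilde{s}_i$. When $m_{i,j}=3$, the two simple roots generate a rank-two subsystem of type $A_2$, and the required identity $\widetilde{s}_i\widetilde{s}_j\widetilde{s}_i=\widetilde{s}_j\widetilde{s}_i\widetilde{s}_j$ is the classical braid identity for Tits' canonical lifts in $SL_3(R)$. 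I would verify this either by a $3\times 3$ matrix computation, or intrinsically by expanding $\widetilde{s}_i=x_{a_i}(1)x_{-a_i}(-1)x_{a_i}(1)$ on both sides and applying the commutation formula of Remark \ref{aaa}.

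Once both families of relations are in place, the map $\overline{r}_i\mapsto\widetilde{s}_i$ respects the defining relations of $\overline{W}_l$ stated just above the lemma, and the universal property of that presentation yields the desired homomorphism $h_l:\overline{W}_l\to\widetilde{W}_l$. The main obstacle is the $m_{i,j}=3$ case of the braid relation: it does not follow formally from the defining relations of $G_l(R)$ and requires genuine rank-two work. The reduction above, however, confines this work to a copy of $SL_3(R)$, where the identity is classical.
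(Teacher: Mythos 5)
Your handling of the first relation is essentially the paper's own argument: both of you reduce it, via $\widetilde{s}_i^{2}=(-1)^{h_i}$, to the defining relation $\widetilde{s}_j t\widetilde{s}_j^{-1}=s'_j(t)$ on $\mathfrak{T}(R)$, and your character computation $s'_j\bigl((-1)^{h_i}\bigr)=(-1)^{h_i-a_{i,j}h_j}$ is correct (the paper's displayed formula $(-1)^{h_i-2a_{i,j}h_i}$ at this point is a typo for exactly this quantity). The divergence is in the braid relation: the paper disposes of it in one line by citing Remark 3.7 of Tits, whereas you propose to reprove it by reduction to rank two. Your restriction of the index set (so that only $m_{i,j}=2,3$ occur) and the $m_{i,j}=2$ case (commuting root subgroups, via Remark \ref{aaa}) are fine. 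The caveat concerns the $m_{i,j}=3$ case: a $3\times 3$ matrix computation verifies the identity only in $SL_3(R)$, but the subgroup of $G_l(R)$ generated by $\mathfrak{U}_{\pm a_i}(R)$ and $\mathfrak{U}_{\pm a_j}(R)$ is in general only a central extension of the corresponding elementary subgroup of $SL_3(R)$ (the kernel being of $K_2$ type), so a relation that holds between matrices does not automatically lift. What is actually needed is Steinberg's classical fact that the braid relation for the canonical lifts $\widetilde{s}_i$ holds already in the rank-two Steinberg group; your alternative suggestion --- expanding both sides and working with the commutation formula of Remark \ref{aaa} together with the conjugation formulas $\widetilde{s}_i x_{a}(r)\widetilde{s}_i^{-1}=x_{s_i(a)}(\pm r)$ --- is the right way to establish that, and it is precisely what the Tits remark cited by the paper records. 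So your route is sound and more self-contained than the paper's, provided the rank-two verification is carried out at the Steinberg-group level rather than by a matrix identity.
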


\begin{proof}
We prove the first assertion and the second assertion will follow
directly. As $\widetilde{s}_{i}^{2}=(-1)^{h_{i}}$ the first relation
is equivalent to
$$\widetilde{s}_{j}(-1)^{h_{i}}\widetilde{s}_{j}^{-1}=(-1)^{h_{i}-2a_{i,j}h_{i}},$$
which is one of the defining relations of $G_{l}(R)$. The second
relation was proved in Remark 3.7 of \cite{ti}.
\end{proof}

Define $w_{i}\in\overline{W}_{2n}$ by sending $\{e_{2i-1},e_{2i}\}$
to $\{e_{2i+1},e_{2i+2}\}$ and leaving the other basis vectors
invariant. set $S_{i}=h_{2n}(w_{i})$, direct computation shows that
$S_{i}=\widetilde{s}_{2i+1}^{3}\widetilde{s}_{2i}\widetilde{s}_{2i-1}\widetilde{s}_{2i+1}\widetilde{s}_{2i}\widetilde{s}_{2i-1}$.

Let $\sigma(i)\in\Sigma_{n}$ be the permutation that swaps the
$i$-th element with the $(i+1)$-th one, then the map
$\sigma(i)\rightarrow S_{i}$ extends to a group homomorphism
$\varsigma_{n}:\Sigma_{n}\rightarrow G(n)=G_{2n}(R)$.
\begin{theorem}
let $G=\lim_{n\rightarrow\infty}G_{n}(R)$, then $\pi=\pi_{0}(G)$
satisfies $\pi=[\pi,\pi]$. Applying Quillen's plus construction
to$BG$ and $\pi^{'}\subseteq\pi_{1}(BG)$, we get an infinite loop
space $BG^{+}$.
\end{theorem}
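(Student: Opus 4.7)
The plan is to apply Theorem \ref{th1} to the sequence $(G(n), f_n, \varsigma_n)$ constructed in this section. Note that $\pi = [\pi,\pi]$ is strictly stronger than hypothesis (3), since it forces $\pi' = \pi$ and then $\pi' = [\pi',\pi']$ tautologically; so it is enough to verify perfection of $\pi$ together with hypotheses (1) and (2) of Theorem \ref{th1}.

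For $\pi = [\pi,\pi]$, I would first show that $G_l(R)$ is perfect for every $l \ge 3$, whence the claim follows by passing to the filtered colimit and then to $\pi_0$. By Remark 2.7, $G_l(R)$ is generated by the images of $\mathfrak{U}_{a_i}(R)$, so it suffices to put each such image in $[G_l(R), G_l(R)]$. Inspecting the Dynkin diagram of $A_{2l-1}^{(2)}$, each simple root $a_i$ lies in a sub-diagram of type $A_2$ (namely $\{a_0, a_2\}$, or $\{a_i, a_{i+1}\}$ for $1 \le i \le l-2$) or of type $C_3$ (the tail $\{a_{l-2}, a_{l-1}, a_l\}$), so Lemma 2.8 places $\mathfrak{U}_{a_i}(R)$ in the commutator subgroup of the corresponding sub-Kac-Moody group, hence in $[G_l(R),G_l(R)]$.

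Hypothesis (1) reduces to a word-level check. Since $\varsigma_m(\sigma(i))$ is a product of $\widetilde{s}_j$ with $j \le 2i+1 \le 2m-1$, and since $g_l$ fixes every $\widetilde{s}_j$ with $j < l$ (because $\tau_l(a_j) = a_j$ for $j < l$, so $\psi_{a_j}$ is the identity), the composite $f_m = g_{2m+1} g_{2m}$ fixes each letter of $\varsigma_m(\sigma(i))$. This gives $f_m \varsigma_m = \varsigma_{m+1}\big|_{\Sigma_m}$.

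The main obstacle is hypothesis (2). Iterating Lemma 3.2 gives $\tau^{(k)}(a_{2m}) = 2a_{2m} + 2a_{2m+1} + \cdots + 2a_{2m+k-1} + a_{2m+k}$, so $f_{m,n}(G(m))$ is the Kac-Moody subgroup of $G(m+n)$ generated by $\mathfrak{U}_{a_i}(R)$ for $0 \le i \le 2m-1$ together with $\mathfrak{U}_{\tau^{(2n)}(a_{2m})}(R)$, plus their Weyl conjugates. The element $\varsigma_{m+n}(c(n,m))$ is, by definition through $h_{2(m+n)}$, the image of a signed permutation that cyclically moves the first $n$ length-two blocks past the remaining $m$; conjugation by it sends $f_{n,m}(G(n))$ into the Kac-Moody subgroup whose simple-root supports lie in $\{a_{2m+1}, \ldots, a_{2(m+n)}\}$, together with the appropriate long-root correction at the far end of the diagram. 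By Remark \ref{aaa} it then suffices to verify that no sum of a root in the support of $f_{m,n}(G(m))$ with one in the conjugated support of $f_{n,m}(G(n))$ is a real root of $A_{2(m+n)-1}^{(2)}$; this is a finite combinatorial check on the Dynkin diagram, placing the two subsystems in non-interacting positions in a manner directly analogous to the block-diagonal picture $SL(m)\times SL(n) \hookrightarrow SL(m+n)$. With the three hypotheses verified, Theorem \ref{th1} yields the infinite loop space structure on $BG^+$.
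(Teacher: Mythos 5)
Your overall strategy coincides with the paper's: apply Theorem \ref{th1}, get perfection of $\pi$ from Lemma 2.8 via Remark 2.7, and verify condition (2) by identifying the root subgroups generating $f_{m,n}(G(m))$ and the conjugate of $f_{n,m}(G(n))$ and checking that they pairwise commute via the commutator formula of Remark \ref{aaa}. Your treatment of perfection and of condition (1) is fine (indeed more explicit than the paper, which dismisses both in a sentence).

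The gap is in condition (2), which is where essentially all of the paper's work lies. Your picture of the conjugated subgroup is wrong in the one place that matters: conjugation by $\varsigma_{m+n}(c(n,m))$ does send the subgroups $\mathfrak{U}_{\pm a_{1}}(R),\dots,\mathfrak{U}_{\pm a_{2n-1}}(R)$ of $f_{n,m}(G(n))$ onto $\mathfrak{U}_{\pm a_{2m+1}}(R),\dots,\mathfrak{U}_{\pm a_{2m+2n-1}}(R)$, but the affine node does not cooperate: the image of $\pm a_{0}$ is $\pm\bigl(a_{0}+a_{1}+2(a_{2}+\cdots+a_{2m})+a_{2m+1}\bigr)$, a root whose support is the entire left half of the diagram and hence overlaps the full support $\{\pm a_{0},\dots,\pm a_{2m-1}\}$ of $f_{m,n}(G(m))$ (and symmetrically at the other end with $2a_{2m-1}+\cdots+2a_{2m+2n-1}+a_{2m+2n}$). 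So the two subsystems are \emph{not} in ``non-interacting positions'' in the block-diagonal sense, and the commutativity of, say, $\mathfrak{U}_{\pm a_{0}}(R)$ with $\mathfrak{U}_{\pm(a_{0}+a_{1}+2(a_{2}+\cdots+a_{2m})+a_{2m+1})}(R)$ is not a routine disjointness check. The paper proves it by showing the corresponding one-dimensional subalgebras $L_{\pm\alpha}$, $L_{\pm\beta}$ commute: for one sign combination it uses that the highest root supported on $\{a_{1},\dots,a_{2m+1}\}$ is $a_{1}+\cdots+a_{2m+1}$ (so the difference is not a root), and for the other it restricts the adjoint action to $\mathfrak{g}_{0}\cong sl_{2}(\mathbb{C})$ and uses weight considerations to force the bracket to vanish; only then does prenilpotency of the pair together with $\vartheta(\alpha,\beta)=\{\alpha,\beta\}$ and Remark \ref{aaa} give commutativity at the group level. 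Your ``finite combinatorial check'' would have to contain exactly this argument, and as stated (checking only that sums are not \emph{real} roots of subsystems in disjoint position) it does not; you should redo the computation of the conjugated generating set and supply the two nontrivial commutation checks.
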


\begin{proof}
The first assertion follows directly from Lemma 2.8. In order to
apply Theorem 1.2 to $G(n)=G_{2n}(R)$,
$f_{n}=g_{2n+1}g_{2n}:G(n)\rightarrow G(n+1)$ and
$\varsigma_{n}:\Sigma_{n}\rightarrow G(n)=G_{2n}(R)$, we only need
to verify the condition 2) of Theorem 1.2. Set $f_{m,n}=
f_{m+n-1}\cdots f_{m+1}f_{m}$, we want to show that $f_{m,n}(G(m))$
and $c(n,m)(f_{n,m}(G(n)))c(m,n)$ are commutative in $G(m+n)$. Set
$s_{nm}:=\phi_{2m+2n}\varsigma_{m+n}(c(n,m))$ in the following,
recall that $\phi_{2m+2n}$ is the natural homomorphism
$W'(A_{4m+4n-1}^{(2)})\rightarrow W(A_{4m+4n-1}^{(2)})$. \\By remark
2.7, $f_{m,n}(G(m))$ is generated by the subgroups
$\{\mathfrak{U}_{a}(R)\}_{a\in\Theta}$ and
$c(n,m)(f_{n,m}(G(n)))c(m,n)$ is generated by the subgroups
$\{\mathfrak{U}_{a}(R)\}_{a\in\Theta'}$, where $$\Theta=\{\pm
a_{0},\cdots,\pm a_{2m-1}, (s_{2m-1}\cdot s_{2m}\cdots
s_{2m+2n-1})(\pm a_{2m+2n})\}$$$$ =\{\pm a_{0},\cdots,\pm a_{2m-1},
\pm(2a_{2m-1}+\cdots+2a_{2m+2n-1}+a_{2m+2n})\}$$
 and
$$\Theta'=s_{n,m}\{\pm a_{0},\cdots,\pm a_{2n-1},(s_{2n-1}\cdot
s_{2m}\cdots s_{2m+2n-1})(\pm a_{2m+2n})\}.$$ Thus in order to
verify condition (2) it suffices to show that for any
$\alpha\in\Theta$ and $\beta\in\Theta'$, $\mathfrak{U}_{\alpha}(R)$
and $\mathfrak{U}_{\beta}(R)$ are commutative, but this can be
deduced from the fact that the subalgebras $L_{\pm\alpha}$ and
$L_{\pm\beta}$ of $\mathfrak{g}_{2m+2n}$ are commutative. Indeed,
when $L_{\pm\alpha}$ and $L_{\pm\beta}$ are commutative, one checks
that $\{\alpha, \beta\}$ is a prenilpotent pair and
$\vartheta(a,b)=\{\alpha, \beta\}$, hence by Remark \ref{aaa} the
group $\mathfrak{U}_{\vartheta(a,b)}(R)$ is commutative. Thus in
order to finish the proof it suffices to show that for any
$\alpha\in\Theta$ and $\beta\in\Theta'$, $L_{\pm\alpha}$ and
$L_{\pm\beta}$ are commutative.

Direct computation shows that $$(s_{2m-1}\cdot s_{2m}\cdots
s_{2m+2n-1})(\pm a_{2m+2n})=s_{nm}(\pm a_{2m+2n});$$
 $$(s_{2n-1}\cdot
s_{2m}\cdots s_{2m+2n-1})(\pm a_{2m+2n})=s_{mn}(\pm a_{2m+2n});$$
$$s_{mn}(\pm
a_{0})=\pm(a_{0}+a_{1}+2(a_{2}+\cdots+a_{2m})+a_{2m+1});$$
$$s_{nm}\{\pm a_{1},\cdots,\pm a_{2n-1}\}=\{\pm a_{2m+1},\cdots,\pm
a_{2m+2n-1}\};$$
$$s_{nm}\{\pm a_{2n+1},\cdots,\pm
a_{2m+2n-1}\}=\{\pm a_{1},\cdots,\pm a_{2m-1}\}.$$ Thus we only need
to show that $L_{\pm(a_{0}+a_{1}+2(a_{2}+\cdots+a_{2m})+a_{2m+1})}$
is commutative with $L_{\pm a_{0}}$ and $L_{\pm a_{2m+2n}}$ is
commutative with $L_{\pm(2a_{2m-1}+\cdots+2a_{2m+2n-1}+a_{2m+2n})}$.
We proof the first assertion, the proof for the second one is
similar.

Firstly, we have
$[f_{0},e_{a_{0}+a_{1}+2(a_{2}+\cdots+a_{2m})+a_{2m+1}}]\in
L_{a_{1}+2(a_{2}+\cdots+a_{2m})+a_{2m+1}}$, but it is well known
that the highest root in
$\mathbb{Z}a_{1}+\mathbb{Z}a_{2}+\cdots+\mathbb{Z}a_{2m+1}\cap\triangle_{2m+2n}$
is $a_{1}+\cdots+a_{2m+1}$. Hence
$[f_{0},e_{a_{0}+a_{1}+2(a_{2}+\cdots+a_{2m})+a_{2m+1}}]=0$. We also
have $[h_{0},e_{a_{0}+a_{1}+2(a_{2}+\cdots+a_{2m})+a_{2m+1}}]=0$.
 Set
$\mathfrak{g}_{0}=\mathbb{C}e_{0}\oplus\mathbb{C}f_{0}\oplus\mathbb{C}h_{0}$
and consider $\mathfrak{g}_{2m+2n}$ as a $\mathfrak{g}_{0}$-module
by restricting of the adjoint representation. By the representation
theory of $\mathfrak{g}_{0}\cong sl_{2}(\mathbb{C})$, it follows
that $$[e_{0},e_{a_{0}+a_{1}+2(a_{2}+\cdots+a_{2m})+a_{2m+1}}]=0.$$
Similarly we have
$$[e_{0},f_{a_{0}+a_{1}+2(a_{2}+\cdots+a_{2m})+a_{2m+1}}]=0$$ and
$$[f_{0},f_{a_{0}+a_{1}+2(a_{2}+\cdots+a_{2m})+a_{2m+1}}]=0.$$ This
finishes the proof of the theorem. The following Dynkin diagram
would illustrate our proof, where $a_{0}^{'}$ and $a_{2m}^{'}$ are
$2a_{2m-1}+\cdots+2a_{2m+2n-1}+a_{2m+2n}$ and $s_{n,m}( a_{0})$
respectively.

\begin{picture}(200,60)
    \put(0,0){\circle*{4}}
    \put(30,0){\circle*{4}}
     \put(60,0){\circle*{4}}
     \put(30,30){\circle*{4}}
     \put(150,0){\circle*{4}}
     \put(150,30){\circle*{4}}
    \put(180,0){\circle*{4}}
    \put(210,0){\circle*{4}}
    \put(240,0){\circle*{4}}
    \put(240,30){\circle*{4}}
     \put(330,0){\circle*{4}}
    \put(360,0){\circle*{4}}
    \put(0,0){\line(1,0){30}}
    \put(30,0){\line(1,0){30}}
    \put(30,0){\line(0,1){30}}
     \put(151,0){\line(0,1){30}}
     \put(149,0){\line(0,1){30}}
    \put(240,0){\line(0,1){30}}
    \put(60,0){\line(1,0){30}}
    \put(120,0){\line(1,0){30}}
    \put(150,0){\line(1,0){30}}
    \put(180,0){\line(1,0){30}}
     \put(210,0){\line(1,0){30}}
      \put(240,0){\line(1,0){30}}
       \put(300,0){\line(1,0){30}}
      \put(330,1){\line(1,0){30}}
       \put(330,-1){\line(1,0){30}}
     \put(-2,-8){$a_{1}$}
      \put(28,-8){$a_{2}$}
       \put(58,-8){$a_{3}$}
       \put(25,35){$a_{0}$}
       \put(145,35){$a_{2m}^{'}$}
        \put(145,14){$\vee$}
        \put(235,35){$a_{0}^{'}$}
    \put(148,-8){$a_{2m-1}$}
    \put(178,-8){$a_{2m}$}
    \put(208,-8){$a_{2m+1}$}
    \put(238,-8){$a_{2m+2}$}
     \put(310,-8){$a_{2m+2n-1}$}
      \put(360,-8){$a_{2m+2n}$}

     \put(340,-2.5){$<$}
      \put(98,-2.8){$\cdots$}
      \put(278,-2.8){$\cdots$}
 \linethickness{0.1pt}
       \multiput(180,-10)(0,1){60}{\line(0,1){0.5}}
\end{picture}\\

\end{proof}

\section{The constructions in the other cases}
The constructions in the other cases is similar. For example in the
case of $A_{l}^{(1)}$, let $\mathfrak{g}_{l}$ be the Kac-Moody
algebra associated to $A_{l}^{(1)}$, and in $\mathfrak{g}_{l+1}$ set
$e_{l}'=s'_{l}(e_{l+1})$, $f_{l}'=s'_{l}(f_{l+1})$,
$h_{l}'=s'_{l}(h_{l+1})=h_{l+1}+h_{l}$ respectively and for $i<l$
set $e_{i}'=e_{i}$, $f_{i}'=f_{i}$, $h_{i}'=h_{i}$ respectively. In
the case of $D_{l+1}^{(1)}$, set $e_{l}'=s'_{l}\cdot
s'_{l-1}(e_{l+1})$, $f_{l}'=s'_{l}\cdot s'_{l-1}(f_{l+1})$,
$h_{l}'=s'_{l}\cdot s'_{l-1}(h_{l+1})=h_{l+1}+h_{l}+h_{l-1}$
respectively. For the rest constructions we just repeat the
arguments of the previous section.
\begin{remark}
In \S3 we require that $\wedge_{l}$ is freely generated by
$\{h_{0},\cdots,h_{l}\}$, in fact this restriction is not necessary.
For example, in the case of $A_{l}^{(1)}$ we can set $\wedge_{l}$ to
be freely generated by $\{h_{1},\cdots,h_{l}\}$ and add an
$h_{0}:=-h_{1}-\cdots-h_{l}$. When $R$ is a field $K$, the
corresponding Kac-Moody group $G_{l}(K)$ is isomorphic to
$SL_{l+1}(K[t,t^{-1}])$, then $G(\infty,K)^{+}$ is of course an
infinite loop space. However, we don't know the explicit realization
of $G_{l}(R)$ in the general case.
\end{remark}
We can also treat the (topological) affine Kac-Moody groups over
$\mathbb{C}$ (see \cite{ka1} for the definition), and applying the
method of \S2 we have the following result.
\begin{theorem}
Let $\{A_{l}\}_{l>2}$ be one of the seven (infinite) classes of
affine generalized Cartan matrices and let $\{G_{l}\}_{l>2}$ be the
associated simply-connected Kac-Moody groups over $\mathbb{C}$, then
we can define for each $l>2$ a natural homomorphism
$f_{l}:G_{l}\rightarrow G_{l+1}$ such that
$BG=\lim_{l\rightarrow\infty} BG_{l}$ is an infinite loop space.
\end{theorem}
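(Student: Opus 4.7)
The plan is to imitate Section 3 almost line for line, with the Tits functor $G_A(R)$ replaced by the topological simply-connected Kac-Moody group $G_A$ over $\mathbb{C}$ defined in \cite{ka1}. For each of the seven Dynkin-diagram families I would first write down a choice of new generators $e'_{i},f'_{i},h'_{i}$ in $\mathfrak{g}_{l+1}$ satisfying the defining relations of $\mathfrak{g}_{l}$. The recipes for $A_{l}^{(1)}$, $D_{l+1}^{(1)}$ and $A_{2l-1}^{(2)}$ are already on the page; for the remaining types $B_{l}^{(1)}$, $C_{l}^{(1)}$, $A_{2l}^{(1)}$ and $D_{l+1}^{(2)}$ the appropriate conjugate is read off directly from the Dynkin diagram, always chosen so that the induced map $\tau_{l}$ on the real root lattice sends positive (resp. negative) roots to positive (resp. negative) roots, as in Lemma 3.2, and so that $\varphi_{l}(E_{a})=E_{\tau_{l}(a)}$ for every real root $a$.

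Next I would lift $\varphi_{l}$ to a continuous homomorphism $f_{l}:G_{l}\to G_{l+1}$. Because $G_{l}$ is simply-connected it is generated by the one-parameter unipotent subgroups $U_{a}$ indexed by the real roots together with a maximal torus, with relations that are the topological counterpart of Tits's presentation. The Lie algebra embedding identifies $L_{a}\subset\mathfrak{g}_{l}$ with $L_{\tau_{l}(a)}\subset\mathfrak{g}_{l+1}$, so there is a unique continuous homomorphism $U_{a}\to U_{\tau_{l}(a)}$ covering it; the commutation relations in a prenilpotent pair transport by the verbatim argument of Lemma 3.3, and combining with the obvious map of tori yields $f_{l}$.

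I would then set $G(n)=G_{2n}$ and define $\varsigma_{n}:\Sigma_{n}\to G(n)$ exactly as at the end of Section 3 via $S_{i}=\widetilde{s}_{2i+1}^{3}\widetilde{s}_{2i}\widetilde{s}_{2i-1}\widetilde{s}_{2i+1}\widetilde{s}_{2i}\widetilde{s}_{2i-1}$, since Lemma 3.4 only uses the identity $\widetilde{s}_{i}^{2}=(-1)^{h_{i}}$ and the braid relations, both of which remain valid in the topological Kac-Moody group. Condition (1) of Theorem \ref{th1} is then immediate from the construction, and condition (3) is trivially satisfied because each $G_{l}$ is connected, so $\pi=\pi_{0}(G)$ is the trivial group and coincides with its own commutator. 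Consequently the plus construction does nothing and $BG^{+}=BG$, which is why the bare classifying space already carries the loop-space structure asserted in the theorem.

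What remains is condition (2), and here the argument is the one used to prove Theorem 3.5: both $f_{m,n}(G(m))$ and its $\phi_{n}(c(n,m))$-conjugate are generated by root subgroups, and by Remark \ref{aaa} it suffices to show that the corresponding root subalgebras $L_{\pm\alpha}$, $L_{\pm\beta}$ of $\mathfrak{g}_{2m+2n}$ commute. In each case this reduces, via the representation theory of $\mathfrak{g}_{0}\cong sl_{2}(\mathbb{C})$ attached to $e_{0},f_{0},h_{0}$ (and of the analogous subalgebra at the opposite end of the diagram), to identifying the highest root of a certain finite sub-root-system. The main obstacle is that this root-system bookkeeping must be redone for each of the six remaining Dynkin diagram types; the coefficients in the expression for $s_{n,m}(a_{0})$ and the shape of the decorated diagram analogous to the picture at the end of the proof of Theorem 3.5 change with the choice of $\tau_{l}$ and with whether the embedded diagram is of type $A$, $B$, $C$ or $D$, but in every case the commutation becomes manifest once the analogue of that picture is drawn. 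Once the seven diagram checks are completed, Theorem \ref{th1} applies and $BG=\varinjlim BG_{l}$ carries the desired infinite loop space structure.
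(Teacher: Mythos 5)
Your proposal takes essentially the same route as the paper, which in fact offers no separate proof of this theorem beyond the remark that the method of \S 3 carries over to the topological Kac--Moody groups over $\mathbb{C}$: you transport the generators $e'_{i},f'_{i},h'_{i}$ and the root-lattice map $\tau_{l}$, build $f_{l}$ from the root subgroups and the torus, reuse the symmetric-group homomorphisms $\varsigma_{n}$, and reduce condition (2) to the commutation of root subalgebras exactly as in the proof of Theorem 3.5, while correctly observing that connectedness of $G_{l}$ makes condition (3) and the plus construction vacuous. The one caveat --- shared with the paper itself --- is that the case-by-case root-system bookkeeping for the six remaining affine types is asserted rather than carried out.
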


\begin{remark}
In fact there exists a (infinite) classes of classical Lie groups
$\{G(l)\}_{l>2}$ such that $G_{l}$ is isomorphic to a central
extension of the group of polynomial loops or twisted polynomial
loops on $G(l)$.
\end{remark}

\end{document}